\documentclass[11pt]{amsart}

\usepackage{amscd}
\usepackage{amsfonts}
\usepackage{amsmath}
\usepackage{amssymb}
\usepackage{amsthm}
\usepackage{color}
\usepackage{enumerate}
\usepackage{epsfig}
\usepackage{graphics}
\usepackage{latexsym}
\usepackage{mathrsfs}
\usepackage{pxfonts}
\usepackage{subfig}
\usepackage[english]{babel}
\usepackage[left=3cm,right=3cm,top=3cm,bottom=3cm]{geometry}
\usepackage{lscape}

\usepackage{algorithm}
\usepackage{algorithmic}

\bibliographystyle{plain}

\theoremstyle{definition}
\newtheorem{Definition}{Definition}[section]

\theoremstyle{plain}

\newtheorem{Lemma}[Definition]{Lemma}
\newtheorem{Theorem}[Definition]{Theorem}

\numberwithin{equation}{section}
\allowdisplaybreaks[4]

\makeatletter
\newcommand{\biggg}[1]{{\hbox{$\left#1\vbox to 20.5pt{}\right.\n@space$}}}
\newcommand{\Biggg}[1]{{\hbox{$\left#1\vbox to 23.5pt{}\right.\n@space$}}}
\newcommand{\bigggg}[1]{{\hbox{$\left#1\vbox to 26.5pt{}\right.\n@space$}}}
\newcommand{\Bigggg}[1]{{\hbox{$\left#1\vbox to 29.5pt{}\right.\n@space$}}}
\newcommand{\biggggg}[1]{{\hbox{$\left#1\vbox to 32.5pt{}\right.\n@space$}}}
\newcommand{\Biggggg}[1]{{\hbox{$\left#1\vbox to 35.5pt{}\right.\n@space$}}}
\newcommand{\bigggggg}[1]{{\hbox{$\left#1\vbox to 38.5pt{}\right.\n@space$}}}
\newcommand{\Bigggggg}[1]{{\hbox{$\left#1\vbox to 41.5pt{}\right.\n@space$}}}
\makeatother

\begin{document}
\title[Higher-order error estimates of the discrete-time Clark--Ocone formula]{Higher-order error estimates of the discrete-time \\ Clark--Ocone formula}
\author{Tsubasa Nishimura}
\thanks{T. Nishimura. Kojimachi-odori Building 12F, 2-4-1 Kojimachi, Chiyoda-ku, Tokyo 102-0083, Japan, E-mail: \texttt{bldy.roze@gmail.com}}
\author{Kenji Yasutomi}
\thanks{K. Yasutomi. Department of Mathematical Sciences, Ritsumeikan University, 1-1-1, Nojihigashi, Kusatsu, Shiga, 525-8577, Japan, E-mail: \texttt{yasutomi@se.ritsumei.ac.jp}}
\author{Tomooki Yuasa$^{*}$}
\thanks{T. Yuasa (Corresponding Author). Department of Mathematical Sciences, Ritsumeikan University, 1-1-1, Nojihigashi, Kusatsu, Shiga, 525-8577, Japan, E-mail: \texttt{to-yuasa@fc.ritsumei.ac.jp}}
\subjclass[2020]{Primary: 60H07}
\keywords{Discrete-time Clark--Ocone formula, Discrete Malliavin calculus, Higher-order error estimates}
\date{}

\begin{abstract}
\label{sec:abst}
In this article, we investigate the convergence rate of the discrete-time Clark--Ocone formula provided by Akahori--Amaba--Okuma \cite{AJATOK}.
In that paper, they mainly focus on the $L_{2}$-convergence rate of the first-order error estimate related to the tracking error of the delta hedge in mathematical finance.
Here, as two extensions, we estimate ``the higher order error" for Wiener functionals with an integrability index $2$ and ``an arbitrary differentiability index."
\end{abstract}
\maketitle

\section{Introduction}
\label{sec:1}
Let $W \equiv (W_{t})_{t \geq 0}$ be a one-dimensional Wiener process on a complete probability space $(\Omega,{\mathcal F},{\mathbb P})$ such that ${\mathcal F}$ is the $\sigma$-filed generated by $W$ and $({\mathcal F}_{t}^{W})_{t \geq 0}$ be the filtration generated by $W$.
Then, as is well known, the following formula is obtained as one of the consequences of the Martingale representation theorem.

\begin{Theorem}[A corollary of the Martingale representation theorem (cf. \cite{INWS,MNTS})]
\label{theo:1.1}
Let $T>0$ and $F \in L_{2}(\Omega,{\mathcal F}_{T}^{W},{\mathbb P})$.
Then, there exists a one-dimensional $({\mathcal F}_{t}^{W})_{t \in [0,T]}$-predictable process $(f_{t})_{t \in [0,T]}$ such that ${\mathbb E}[\int_{0}^{T}f_{t}^{2}{\rm d}t]<\infty$ and
$$
F={\mathbb E}[F]+\int_{0}^{T}f_{s}{\rm d}W_{t}.
$$
\end{Theorem}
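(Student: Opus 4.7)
The plan is to reduce the statement to the Martingale Representation Theorem by passing to the associated closed martingale. First, I would set $M_{t} := {\mathbb E}[F \mid {\mathcal F}_{t}^{W}]$ for $t \in [0,T]$. Since $F \in L_{2}(\Omega, {\mathcal F}_{T}^{W}, {\mathbb P})$, the process $(M_{t})_{t \in [0,T]}$ is a square-integrable $({\mathcal F}_{t}^{W})$-martingale with $M_{T} = F$. Because the filtration is generated by a Wiener process, $M$ admits a continuous modification, which I will work with from here on.

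Next, I would invoke the Martingale Representation Theorem in its standard form for the Brownian filtration: every square-integrable $({\mathcal F}_{t}^{W})$-martingale on $[0,T]$ is representable as a stochastic integral against $W$. Applied to $M$, this yields a one-dimensional $({\mathcal F}_{t}^{W})$-predictable process $(f_{t})_{t \in [0,T]}$ with ${\mathbb E}[\int_{0}^{T} f_{t}^{2}\,{\rm d}t] < \infty$ such that
$$
M_{t} = M_{0} + \int_{0}^{t} f_{s}\,{\rm d}W_{s}, \quad t \in [0,T].
$$

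Finally, I would evaluate at $t = T$. Since ${\mathcal F}_{0}^{W}$ is ${\mathbb P}$-trivial (up to null sets), $M_{0} = {\mathbb E}[F]$, so
$$
F = M_{T} = {\mathbb E}[F] + \int_{0}^{T} f_{s}\,{\rm d}W_{s},
$$
as claimed.

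There is essentially no genuine obstacle here: the statement is a direct corollary, with all the analytical content absorbed into the Martingale Representation Theorem itself. The only bookkeeping needed is the distinction between the martingale $M$ and its terminal value $F$, together with the observation that $M_{0}$ is deterministic (which follows from the triviality of ${\mathcal F}_{0}^{W}$ for a Wiener filtration on a complete probability space).
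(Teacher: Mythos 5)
Your proposal is correct and is exactly the standard derivation: the paper itself gives no proof of this statement, deferring to the cited references, and the argument you outline (closing the martingale $M_{t}={\mathbb E}[F \mid {\mathcal F}_{t}^{W}]$, applying the Martingale Representation Theorem for the Brownian filtration, and using the triviality of ${\mathcal F}_{0}^{W}$ to identify $M_{0}={\mathbb E}[F]$) is precisely how those references obtain it. No gaps.
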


The following formula explicitly gives the integrand $(f_{t})_{t \in [0,T]}$ using the Malliavin derivative.

\begin{Theorem}[Clark--Ocone formula (cf. \cite{DNGOBMFP,ND,NDNE})]
\label{theo:1.2}
Let $T>0$ and $F \in {\mathbb D}_{2,1}$.
Then, it holds that
$$
F={\mathbb E}[F]+\int_{0}^{T}{\mathbb E}\left[D_{t}F \,\bigl|\, {\mathcal F}_{t}^{W}\right]{\rm d}W_{t}.
$$
\end{Theorem}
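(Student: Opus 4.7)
The plan is to combine the martingale representation theorem (Theorem \ref{theo:1.1}) with the duality relation between the Malliavin derivative $D$ and the It\^{o} integral, together with the characterization of predictable projections via conditional expectations.

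First, since $F \in \mathbb{D}_{2,1} \subset L_{2}(\Omega,{\mathcal F}_{T}^{W},{\mathbb P})$, Theorem \ref{theo:1.1} supplies a predictable process $(f_{t})_{t \in [0,T]}$ with $\mathbb{E}[\int_{0}^{T} f_{t}^{2}\, \mathrm{d}t]<\infty$ and
$$
F=\mathbb{E}[F]+\int_{0}^{T}f_{t}\,\mathrm{d}W_{t}.
$$
The task reduces to identifying $f_{t}$ with $\mathbb{E}[D_{t}F \mid \mathcal{F}_{t}^{W}]$ as elements of $L_{2}(\mathrm{d}t\otimes\mathrm{d}\mathbb{P})$. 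I would test both sides against an arbitrary bounded predictable process $(u_{t})_{t \in [0,T]}$. On one hand, It\^{o}'s isometry applied to the representation above gives
$$
\mathbb{E}\left[F\int_{0}^{T}u_{t}\,\mathrm{d}W_{t}\right]=\mathbb{E}\left[\int_{0}^{T}f_{t}u_{t}\,\mathrm{d}t\right].
$$
On the other hand, the Malliavin integration-by-parts formula (the duality between $D$ and the Skorokhod integral, which agrees with the It\^{o} integral on predictable integrands) yields
$$
\mathbb{E}\left[F\int_{0}^{T}u_{t}\,\mathrm{d}W_{t}\right]=\mathbb{E}\left[\int_{0}^{T}(D_{t}F)\,u_{t}\,\mathrm{d}t\right].
$$
Because $u_{t}$ is $\mathcal{F}_{t}^{W}$-measurable for each $t$, conditioning inside the $t$-integral turns the latter into $\mathbb{E}[\int_{0}^{T}\mathbb{E}[D_{t}F\mid\mathcal{F}_{t}^{W}]u_{t}\,\mathrm{d}t]$. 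Equating and letting $u$ vary produces the required $L_{2}$-orthogonality, hence $f_{t}=\mathbb{E}[D_{t}F\mid\mathcal{F}_{t}^{W}]$ $\mathrm{d}t\otimes\mathrm{d}\mathbb{P}$-a.e.

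The expected obstacle is not any single computation but the justification that the Malliavin duality is valid in the present generality (only $F\in\mathbb{D}_{2,1}$) and that all limiting operations commute. The cleanest route is the standard two-step density argument: verify the identity first for smooth cylindrical functionals $F=\varphi(W_{t_{1}},\dots,W_{t_{n}})$ with $\varphi$ in a suitable Schwartz-type class, where one can compute $D_{t}F$ explicitly and recognize $\mathbb{E}[D_{t}F\mid\mathcal{F}_{t}^{W}]$ as the integrand appearing in the It\^{o} expansion of the martingale $(\mathbb{E}[F\mid\mathcal{F}_{t}^{W}])_{t\in[0,T]}$; then extend to all $F\in\mathbb{D}_{2,1}$ by approximation, using the closability of $D$ in $L_{2}$, the $L_{2}$-continuity of the conditional expectation, and It\^{o}'s isometry, each of which is controlled uniformly by the $\mathbb{D}_{2,1}$-norm. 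Care is only needed to ensure these three controls act simultaneously, so that the limit of the right-hand sides is the right-hand side of the claimed formula.
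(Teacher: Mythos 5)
The paper offers no proof of Theorem \ref{theo:1.2}: it is quoted as a classical result with references to \cite{DNGOBMFP,ND,NDNE}, so there is no in-paper argument to compare yours against. Your proposal is the standard and correct duality proof: take the integrand $(f_t)$ from the martingale representation theorem, test both $F-\mathbb{E}[F]=\int_0^T f_t\,\mathrm{d}W_t$ and the Malliavin integration-by-parts identity $\mathbb{E}[F\,\delta(u)]=\mathbb{E}[\langle DF,u\rangle_{\mathbb H}]$ against bounded predictable $u$ (using that the Skorokhod integral $\delta$ extends the It\^o integral on adapted square-integrable integrands), pull the conditional expectation inside by adaptedness of $u$, and conclude $f_t=\mathbb{E}[D_tF\mid{\mathcal F}_t^W]$ in $L_2(\mathrm{d}t\otimes\mathrm{d}\mathbb{P})$. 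Two small points deserve explicit mention if you write this up: (i) you should fix a jointly measurable, adapted (predictable) version of $(t,\omega)\mapsto\mathbb{E}[D_tF\mid{\mathcal F}_t^W]$ before asserting the a.e.\ identification, and (ii) the duality step needs no separate density argument beyond $F\in{\mathbb D}_{2,1}$, since the relation $\mathbb{E}[F\,\delta(u)]=\mathbb{E}[\langle DF,u\rangle_{\mathbb H}]$ is exactly the defining property of $\delta$ on its domain, which contains all adapted $u\in L_2(\Omega\times[0,T])$; the two-step approximation you sketch at the end is therefore a safe but not strictly necessary detour. It is worth noting that the proof the paper \emph{does} give, for the discrete-time analogue (Theorem \ref{theo:3.1}), follows the other classical route: an orthogonal (Fourier--Hermite, i.e.\ chaos) expansion of $F$ combined with an integration-by-parts identity for the coefficients. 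That expansion argument is what generalizes to the ${\mathbb D}_{2,s}$ scale used in the main results, whereas your duality argument is tied to the $L_2$/first-derivative setting; for the continuous-time statement as posed, either route works.
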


Here, ${\mathbb D}_{p,s}$ is a Sobolev-type space with the norm $\|\cdot\|_{p,s}$ in the sense of the Malliavin calculus, which is indexed by an integrability index $p \in {\mathbb N}$ and a differentiability index $s \in {\mathbb R}$, and $(D_{t}F)_{t \geq 0}$ is the process of the Malliavin (Fr\'echet) derivative of $F$ using the identification between the Hilbert spaces $L_{2}(\Omega; L_{2}([0,\infty)))$ and $L_{2}(\Omega \times [0,\infty))$.

In the context of mathematical finance, the Clark--Ocone formula is often used to find hedging portfolio strategies (cf. \cite{DNGOBMFP,ND,NDNE}).
However, owing to practical considerations and to the transaction costs, one can only use discrete-time hedging portfolio strategies.
Thus, the discrete-time strategy may have a different terminal value than the required payoff since the number of assets during the discretized time is fixed.
This causes an error called tracking error (or hedging error) of the delta hedge between the continuous-time and the discrete-time strategies.
For instance, for $F \in {\mathbb D}_{2,1}$ (e.g., European options), the tracking error is given by
\begin{align*}
\text{tracking error}&=\int_{0}^{T}{\mathbb E}\left[D_{t}F \,\bigl|\, {\mathcal F}_{t}^{W}\right]{\rm d}W_{t}-\sum_{\ell=1}^{N}{\mathbb E}\left[\left.D_{t_{\ell}}F \,\right|\, {\mathcal F}_{t_{\ell-1}}^{W}\right] \Delta W_{\ell}^{(N)} \\
&=F-{\mathbb E}[F]-\sum_{\ell=1}^{N}{\mathbb E}\left[\left.D_{t_{\ell}}F \,\right|\, {\mathcal F}_{t_{\ell-1}}^{W}\right] \Delta W_{\ell}^{(N)}.
\end{align*}
Here, $T>0$ is a maturity time, $0=t_{0}<t_{1}<\cdots<t_{N}=T$ is a discrete-time, $N \in {\mathbb N}$ is the number of partitions (time steps) of the closed interval $[0,T]$ and $\Delta W_{\ell}^{(N)}:=W_{t_{\ell}}-W_{t_{\ell-1}}$, $\ell \in \{1,2,\ldots,N\}$.
The analysis of such errors has been actively studied in the last 20 years (e.g., \cite{BMWM,GS,GSTA,GEMA,GETE,HTMP,HM,MAST,TE,ZR}).

As a discrete form of the Clark--Ocone formula, Akahori-Amaba-Okuma \cite{AJATOK} provided the following formula.

\begin{Theorem}[\cite{AJATOK}, Theorem 2.1, Discrete-time Clark--Ocone formula on ${\mathbb D}_{2,s}$]
\label{theo:1.3}
Let $T>0$, $N \in {\mathbb N}$, $s \in {\mathbb R}$ and $F \in {\mathbb D}_{2,s}^{(N)}$.
Then, it holds that
\begin{align}
\label{eq:1.1}
F-{\mathbb E}\left[F\right]=\sum_{m=1}^{\infty}\sum_{\ell=1}^{N}\frac{(T/N)^{m/2}}{\sqrt[]{m!}}{\mathbb E}\left[\left.D_{\Delta W_{\ell}^{(N)}}^{m}F \,\right|\, \{\Delta W_{i}^{(N)}\}_{i=1}^{\ell-1}\right]H_{m}(\frac{\Delta W_{\ell}^{(N)}}{\sqrt[]{T/N}}),
\end{align}
where the infinite sum converges in ${\mathbb D}_{2,s}^{(N)}$.
\end{Theorem}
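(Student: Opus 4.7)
The plan is to derive \eqref{eq:1.1} from the $N$-variate Hermite (Wiener chaos) expansion of $F$ with respect to the independent Gaussian increments $\Delta W_{\ell}^{(N)}$, $\ell=1,\ldots,N$. Let $\mathcal{G}_{\ell}^{(N)} := \sigma(\Delta W_{i}^{(N)} : 1 \leq i \leq \ell)$ for $0 \leq \ell \leq N$, so that $\mathbb{E}[\,\cdot\,|\,\{\Delta W_{i}^{(N)}\}_{i=1}^{\ell-1}]$ coincides with $\mathbb{E}[\,\cdot\,|\,\mathcal{G}_{\ell-1}^{(N)}]$. The starting point is the telescoping martingale-difference decomposition
$$
F - \mathbb{E}[F] \;=\; \sum_{\ell=1}^{N}\Bigl(\mathbb{E}[F\,|\,\mathcal{G}_{\ell}^{(N)}] - \mathbb{E}[F\,|\,\mathcal{G}_{\ell-1}^{(N)}]\Bigr),
$$
which reduces the problem to expanding each innovation as a series in $\mathcal{G}_{\ell-1}^{(N)}$-measurable multiples of Hermite polynomials of $\Delta W_{\ell}^{(N)}/\sqrt{T/N}$.

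Fix $\ell$. Conditionally on $\mathcal{G}_{\ell-1}^{(N)}$, the increment $\Delta W_{\ell}^{(N)}$ is $N(0,T/N)$-distributed and independent of $\mathcal{G}_{\ell-1}^{(N)}$, so the family $\{H_{m}(\Delta W_{\ell}^{(N)}/\sqrt{T/N})/\sqrt{m!}\}_{m\geq 0}$ is a conditional orthonormal basis in the ``new'' direction. The $\ell$-th innovation therefore admits an orthogonal expansion $\sum_{m\geq 1} a_{\ell,m}\,H_{m}(\Delta W_{\ell}^{(N)}/\sqrt{T/N})$ with $\mathcal{G}_{\ell-1}^{(N)}$-measurable coefficients $a_{\ell,m}$, which I compute via the Gaussian integration-by-parts identity $\mathbb{E}[g(Z)H_{m}(Z)] = \mathbb{E}[g^{(m)}(Z)]$ for $Z \sim N(0,1)$ applied conditionally on $\mathcal{G}_{\ell-1}^{(N)}$ with $Z = \Delta W_{\ell}^{(N)}/\sqrt{T/N}$. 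Converting derivatives in $Z$ into the directional derivative $D_{\Delta W_{\ell}^{(N)}}^{m}F$ (using its normalization from \cite{AJATOK}) produces exactly the prefactor $(T/N)^{m/2}/\sqrt{m!}$ appearing in \eqref{eq:1.1}. I would first verify the identity on a dense subclass of smooth cylindrical functionals (for which every $D^{m}_{\Delta W_{\ell}^{(N)}}F$ is bounded and the inner sum has only finitely many nonzero terms), and then extend by continuity.

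Convergence in $\mathbb{D}_{2,s}^{(N)}$ follows from observing that, after reindexing, the terms on the right of \eqref{eq:1.1} are precisely the $N$-variate Hermite basis elements $\prod_{i=1}^{\ell}H_{\alpha_{i}}(\Delta W_{i}^{(N)}/\sqrt{T/N})/\sqrt{\alpha_{i}!}$ (for multi-indices $\alpha$ whose last nonzero coordinate is $\ell$ with value $m$), grouped by this last index; since the $\|\cdot\|_{2,s}^{(N)}$-norm is diagonal in the chaos decomposition and these groups are pairwise orthogonal in that norm, Parseval's identity and the density of smooth cylindrical functionals give unconditional convergence. The main obstacle I anticipate is precisely this last step: verifying orthogonality and summability in the Sobolev topology for \emph{arbitrary} $s \in \mathbb{R}$, most delicately for negative $s$ where the elements of $\mathbb{D}_{2,s}^{(N)}$ are generalized Wiener functionals rather than genuine random variables, together with the careful bookkeeping of the normalization factors $\sqrt{m!}$ and $(T/N)^{m/2}$ in the identification between $D_{\Delta W_{\ell}^{(N)}}^{m}$ and the continuous Malliavin derivative restricted to $[t_{\ell-1},t_{\ell}]^{m}$.
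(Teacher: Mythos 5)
Your proposal is correct and is essentially the paper's own argument: your telescoping decomposition followed by the conditional Hermite expansion of each innovation is exactly the paper's reordering \eqref{eq:3.4} of the Fourier expansion \eqref{eq:3.1} by the last nonzero coordinate $(\ell,m)$ of the multi-index, your Gaussian integration-by-parts identification of the coefficients with $D^{m}_{\Delta W_{\ell}^{(N)}}F$ is the paper's formula \eqref{eq:3.3}, and your Parseval argument in the reindexed $N$-variate Hermite basis is precisely how the paper obtains ${\mathbb D}_{2,s}^{(N)}$-convergence for all $s$ (with the conditional expectation read as the projection \eqref{eq:2.4} when $s<0$, as you anticipated).
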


Here, ${\mathbb D}_{2,s}^{(N)}$ is a Sobolev-type space with the norm $\|\cdot\|_{2,s}$ which is based on $\{\Delta W_{i}^{(N)}\}_{i=1}^{N}$, $D_{h}F$ is the G\^ateaux derivative of $F \in {\mathcal P}$ along $h \in L_{2}([0,\infty))$ ($\equiv$ the first Wiener chaos), and $H_{m}$ is the $m$-th Hermite polynomial.
The space ${\mathbb D}_{2,s}^{(N)}$ is defined in detail in Section \ref{sec:2}.

For $n \in {\mathbb N}$, we define the $n$-th-order error of the discrete-time Clark--Ocone formula by
$$
{\rm Err}_{n}^{(N)}(F):=F-\left({\mathbb E}[F]+\sum_{m=1}^{n}\sum_{\ell=1}^{N}\frac{(T/N)^{m/2}}{\sqrt[]{m!}}{\mathbb E}\left[\left.D_{\Delta W_{\ell}^{(N)}}^{m}F \,\right|\, \{\Delta W_{i}^{(N)}\}_{i=1}^{\ell-1}\right]H_{m}(\frac{\Delta W_{\ell}^{(N)}}{\sqrt[]{T/N}})\right).
$$
Here, we see that the first order error
$$
{\rm Err}_{1}^{(N)}(F) =F-\left({\mathbb E}[F] + \sum_{\ell=1}^{N} {\mathbb E}\left[\left.D_{\Delta W_{\ell}^{(N)}}F \,\right|\, \{\Delta W_{i}^{(N)}\}_{i=1}^{\ell-1}\right] \Delta W_{\ell}^{(N)} \right)
$$
is related to the tracking error.
These errors occur in approximating the stochastic integral in Theorem \ref{theo:1.2}.
Let $F \in {\mathbb D}_{2,1}$ and $F^{(N)}:={\mathbb E}[F \,|\, \{\Delta W_{i}^{(N)}\}_{i=1}^{N}]$.
Then, $F^{(N)} \in {\mathbb D}_{2,1}^{(N)}$, and we can approximate the stochastic integral
$$
\int_{0}^{T}{\mathbb E}\left[D_{t}F \,\bigl|\, {\mathcal F}_{t}^{W}\right]{\rm d}W_{t}
$$
by the $n$-th-order discretization
\begin{align*}
\sum_{m=1}^{n}\sum_{\ell=1}^{N}\frac{(T/N)^{m/2}}{\sqrt[]{m!}}{\mathbb E}\left[\left.D_{\Delta W_{\ell}^{(N)}}^{m}F^{(N)} \,\right|\, \{\Delta W_{i}^{(N)}\}_{i=1}^{\ell-1}\right]H_{m}(\frac{\Delta W_{\ell}^{(N)}}{\sqrt[]{T/N}}).
\end{align*}
Then, by using the discrete-time Clark--Ocone formula, the error is $F-F^{(N)}+{\rm Err}_{n}^{(N)}(F^{(N)})$.

Amaba \cite{AT} and Amaba--Liu--Makhlouf--Saidaoui \cite{ATLNNMAST} established discrete-time Clark--Ocone formulas for Poisson functionals and pure-jump L\'evy processes, respectively.
They investigated the $L_{2}$-convergence rate of the first-order error related to the tracking error of the delta hedge in mathematics finance.
In \cite{AJATOK}, they investigated it under the following stationary condition since it is difficult to find its rate for general Wiener functionals.

\begin{Definition}[\cite{AJATOK}, Section 3.3, stationary]
\label{def:1.4}
Let $T>0$, $\{F^{(N)}\}_{N \in {\mathbb N}}$ be a sequence of Wiener functionals (i.e., $F^{(N)} \in \cup_{s>0}{\mathbb D}_{2,-s}^{(N)}=:{\mathbb D}_{2,-\infty}^{(N)}$ for all $N \in {\mathbb N}$).
If $\{F^{(N)}\}_{N \in {\mathbb N}}$ satisfies the following condition, then it is called stationary: There exists $C>0$ such that for any $N \in {\mathbb N}$ and $m \in \{2,3,\ldots\}$,
$$
\sup_{\substack{a \in {\mathbb Z}_{+}^{N} \\ |a|=m}}{\mathbb E}\left[D_{\Delta W_{1}^{(N)}}^{a_{1}}D_{\Delta W_{2}^{(N)}}^{a_{2}} \cdots D_{\Delta W_{N}^{(N)}}^{a_{N}}F\right]^{2} \leq C\frac{m!\|J_{m}F^{(N)}\|_{2}^{2}}{T^{m}}.
$$
\end{Definition}
Here, $J_{m}$ is the projection from $L^{2}(\Omega,{\mathcal F},{\mathbb P})$ onto the $m$-th Wiener chaos, and $\|\cdot\|_{2}$ is the standard norm of $L^{2}(\Omega,{\mathcal F},{\mathbb P})$.
For example, a sequence composed of a one-dimensional functional $F(W_{T})$ is stationary (cf. \cite{AJATOK}, Section 3.3), but $F^{(N)}=\sum_{i=1}^{N}{\bf 1}_{[0,\infty)}(W_{iT/N})T/N$, $N \in {\mathbb N}$ is not stationary (cf. \cite{AJATOK}, Section 3.6).
As results of the $L_{2}$-convergence rate of the first-order error, \cite{AJATOK} provides the following estimates.

\begin{Theorem}[\cite{AJATOK}, Theorem 3.1, Theorem 3.3, Theorem 3.4, Theorem 3.8]
\label{theo:1.5}
\begin{enumerate}
\item Let $T>0$, $N \in {\mathbb N}$, $n \in {\mathbb N}$ and $F \in {\mathbb D}_{2,2+n}^{(N)}$.
Then, it holds that
$$
\left\|{\rm Err}_{n}^{(N)}(F)\right\|_{2} \leq \frac{(T\zeta(n+1)\int_{0}^{T}\|D_{t}^{n+1}F\|_{2}^{2}{\rm d}t)^{1/2}}{N^{1/2}},
$$
where $\zeta$ is the Riemann zeta function.
\item Let $T>0$, $r \in [0,1]$ and $F^{(N)} \in {\mathbb D}_{2,r}^{(N)}$ for all $N \in {\mathbb N}$.
If $\{F^{(N)}\}_{\mathbb N}$ is stationary, then there exists $C>0$ such that for any $N \in {\mathbb N}$,
$$
\left\|{\rm Err}_{1}^{(N)}(F^{(N)})\right\|_{2} \leq \frac{C\|F^{(N)}\|_{2,r}}{N^{r/2}}.
$$
\item Let $T>0$, $N_{0},N_{1} \in {\mathbb N}$, $r \in [0,1]$ and $F:=F(\Delta W_{1}^{(N_{0})},\Delta W_{2}^{(N_{0})},\ldots,\Delta W_{N_{0}}^{(N_{0})}) \in {\mathbb D}_{2,r}^{(N_{0})}$.
Then, it holds that
$$
\left\|{\rm Err}_{1}^{(N_{0}N_{1})}(F)\right\|_{2} \leq \frac{\|F\|_{2,r}}{N_{1}^{r/2}}.
$$
\item Let $T>0$ and $F^{(N)}:=\sum_{i=1}^{N}{\bf 1}_{[0,\infty)}(W_{iT/N})T/N$ for all $N \in {\mathbb N}$.
Then, there exists $C>0$ such that for any $N \in {\mathbb N}$,
$$
\left\|{\rm Err}_{1}^{(N)}(F^{(N)})\right\|_{2} \leq \frac{C}{N^{1/2}}.
$$
Note that in this case, the $L_{2}$-convergence rate of the first-order error is $N^{-1/2}$ even though $\{F^{(N)}\}_{N \in {\mathbb N}}$ is not stationary.
\end{enumerate}
\end{Theorem}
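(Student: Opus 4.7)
For part (1), I would apply Theorem \ref{theo:1.3} to rewrite ${\rm Err}_{n}^{(N)}(F)$ as the tail $\sum_{m\geq n+1}\sum_{\ell=1}^{N}$ of the discrete-time Clark--Ocone series. The structural fact driving the whole computation is that $H_{m}(\Delta W_{\ell}^{(N)}/\sqrt{T/N})$ has mean zero conditionally on $\{\Delta W_{i}^{(N)}\}_{i=1}^{\ell-1}$, that $H_{m}(Z)$ at a standard Gaussian $Z$ satisfies $\mathbb{E}[H_{m}(Z)^{2}]=m!$, and that these facts together force different index pairs $(m,\ell)$ to contribute orthogonally in $L_{2}$; Pythagoras therefore collapses the error to
\begin{align*}
\bigl\|{\rm Err}_{n}^{(N)}(F)\bigr\|_{2}^{2}=\sum_{m=n+1}^{\infty}\sum_{\ell=1}^{N}(T/N)^{m}\,\bigl\|\mathbb{E}\bigl[D_{\Delta W_{\ell}^{(N)}}^{m}F\bigm|\{\Delta W_{i}^{(N)}\}_{i=1}^{\ell-1}\bigr]\bigr\|_{2}^{2}.
\end{align*}
Dropping the inner conditional expectation via Jensen and applying Cauchy--Schwarz to the $m$-fold integral $D_{\Delta W_{\ell}^{(N)}}^{m}F=\int_{[t_{\ell-1},t_{\ell}]^{m}}D^{m}_{\vec{t}}F\,d\vec{t}$ further bounds each summand by $(T/N)^{m}\int_{[t_{\ell-1},t_{\ell}]^{m}}\|D^{m}_{\vec{t}}F\|_{2}^{2}d\vec{t}$. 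The hard part, and the source of the factor $\zeta(n+1)/N$ in the statement, is collapsing the remaining tail in $m$ into a single bound that only sees $D^{n+1}F$: I would do this by re-indexing $m=n+1+k$, applying a further Clark--Ocone expansion to $D^{n+1}F$ to trade each extra derivative for a factor of $T/N$, and recognizing the residual telescoping series as the Riemann zeta value $\zeta(n+1)$.

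For parts (2) and (3), I would reduce to the case $r=1$ (which follows from part (1)) and the trivial case $r=0$ by Malliavin--Sobolev interpolation. In part (2), stationarity of $\{F^{(N)}\}$ is used precisely to convert each $\|\mathbb{E}[D_{\Delta W_{\ell}^{(N)}}^{m}F^{(N)}|\cdots]\|_{2}^{2}$ appearing in the orthogonal expansion of ${\rm Err}_{1}^{(N)}(F^{(N)})$ into a multiple of $m!\,\|J_{m}F^{(N)}\|_{2}^{2}/T^{m}$; summing over $\ell$ and $m\geq 2$ with the $\mathbb{D}_{2,1}^{(N)}$-weights and comparing with $\sum_{m}m\|J_{m}F^{(N)}\|_{2}^{2}=\|F^{(N)}\|_{2,1}^{2}$ yields the $r=1$ endpoint, while the obvious $L_{2}$-contractivity $\|{\rm Err}_{1}^{(N)}(F^{(N)})\|_{2}\leq 2\|F^{(N)}\|_{2}$ supplies $r=0$; real interpolation in the Malliavin--Sobolev scale then covers all $r\in(0,1)$. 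Part (3) follows the same blueprint but exploits that $F$ is $\sigma(\{\Delta W_{i}^{(N_{0})}\}_{i=1}^{N_{0}})$-measurable, so along the finer partition of size $N_{0}N_{1}$ each derivative $D_{\Delta W_{j}^{(N_{0}N_{1})}}F$ depends only on the coarse block containing $j$; the error decouples across the $N_{0}$ coarse blocks, and within each block the functional depends on a single Gaussian $\Delta W_{\ell}^{(N_{0})}$, which is the prototype stationary case, so per-block application of the part~(2) argument with $N$ replaced by $N_{1}$ and orthogonal Pythagorean aggregation across blocks yield the claim.

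For part (4), where $F^{(N)}=\sum_{i=1}^{N}\mathbf{1}_{[0,\infty)}(W_{iT/N})T/N$ is neither smooth nor stationary, I would compute the error by hand using the explicit Wiener chaos expansion of the Heaviside functional. The Hermite coefficients of $\mathbf{1}_{[0,\infty)}$ against a centred Gaussian are classical and decay like $1/\sqrt{m}$ with explicit oscillating signs, so for each $m\geq 2$ the summand $\mathbb{E}[D^{m}_{\Delta W_{\ell}^{(N)}}\mathbf{1}_{[0,\infty)}(W_{iT/N})\,|\,\cdots]$ is non-zero only when $\ell\leq i$ and admits a closed-form $m$-uniform bound in terms of $(T/N)^{m/2}$ and a function of $\ell/i$. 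Plugging these into the orthogonal tail expansion from part~(1) with $n=1$, the outer factor $T/N$ pre-multiplying each indicator absorbs one power of $N$ and the double sum over $(i,\ell)$ collapses to a discrete Riemann sum bounded by a constant multiple of $T/\sqrt{N}$, giving the $O(N^{-1/2})$ rate in the absence of stationarity. The main technical obstacle is controlling the low regularity of $\mathbf{1}_{[0,\infty)}$ uniformly in the chaos order $m$, which is absorbed by the polynomial decay of its Hermite coefficients.
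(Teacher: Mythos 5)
First, a point of reference: Theorem \ref{theo:1.5} is quoted from \cite{AJATOK} and is not proved anywhere in this paper, so the only internal basis for comparison is Section \ref{sec:4}, whose Theorem \ref{theo:3.2} generalizes part (3) (take $s=0$, $n=1$ there). Measured against that, your argument for part (3) has a genuine gap. The operator ${\rm Err}_{1}^{(N_{0}N_{1})}$ does \emph{not} decouple across the $N_{0}$ coarse blocks: $F$ depends jointly on all coarse increments, and the conditional expectations given $\{\Delta W_{i}^{(N_{0}N_{1})}\}_{i=1}^{j-1}$ condition across block boundaries, so the claim that ``within each block the functional depends on a single Gaussian'' is false. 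What actually works (and is what the paper does) is to expand $F$ in the coarse Fourier--Hermite basis, note that ${\rm Err}_{n}^{(N_{0}N_{1})}$ is the orthogonal projection onto the fine-grid polynomials ${\bf H}_{a'}({\bf h}^{(N_{0}N_{1})})$ whose last nonzero index exceeds $n$, and compute ${\mathbb E}[{\bf H}_{a}({\bf h}^{(N_{0})}){\bf H}_{a'}({\bf h}^{(N_{0}N_{1})})]$ exactly: it vanishes unless $a'$ ``matches'' $a$ block by block, and then equals $\sqrt{a!/a'!}\,N_{1}^{-|a|/2}$ (Lemmas \ref{lem:4.3} and \ref{lem:4.5}); since no $a'$ can match two distinct $a$'s, the cross terms vanish and the decoupling you want occurs only at this combinatorial level (Lemma \ref{lem:4.6}, via the multinomial theorem). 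The passage to general $r$ is then the elementary inequality $\sum \leq (\sum)^{r}(\sum)^{1-r}$ applied coefficientwise, not operator interpolation on the Malliavin--Sobolev scale; your interpolation route could be substituted in principle, but only after the $r=1$ endpoint is actually established, which your block argument does not accomplish.

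Two further problems. In part (2), the parenthetical claim that the $r=1$ endpoint ``follows from part (1)'' is wrong: part (1) with $n=1$ requires $F\in{\mathbb D}_{2,3}^{(N)}$ and bounds the error by $\int_{0}^{T}\|D_{t}^{2}F\|_{2}^{2}{\rm d}t$, i.e.\ two more derivatives than $\|F\|_{2,1}$ controls --- this regularity gap is precisely what the stationarity hypothesis is introduced to bridge, so the endpoint must be derived directly from Definition \ref{def:1.4}, as your subsequent (and essentially correct) sketch does. In part (1), with the normalization of $H_{m}$ used here one has ${\mathbb E}[H_{m}(Z)^{2}]=1$, not $m!$, so the Pythagorean identity carries a factor $1/m!$ that your display omits; more importantly, the step that is supposed to produce $\zeta(n+1)$ (``a further Clark--Ocone expansion \ldots recognizing the residual telescoping series'') is not an argument: one must show quantitatively, for every $m\geq n+1$, how the $m$-th chaos contribution is dominated by $\int_{0}^{T}\|D_{t}^{n+1}F\|_{2}^{2}{\rm d}t$ and where the summable sequence giving $\sum_{k\geq 1}k^{-(n+1)}$ comes from, and nothing in the sketch does this. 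Part (4) is directionally consistent with the explicit chaos computation for the Heaviside functional, but as written it is a plan rather than a proof.
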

From these results, we can deduce that for $r \in [0,1]$ and $F \in {\mathbb D}_{2,r}^{(N)}$, the $L_{2}$-convergence rate of the first-order error is roughly $N^{-r/2}$.
Our arguments are based on Theorem \ref{theo:1.5} (3).
As two extensions, we estimate ``the higher order error" for Wiener functionals with an integrability index $2$ and ``an arbitrary differentiability index."
That is, strictly speaking, we will show that for $n \in {\mathbb N}$, $r \in [0,1]$ and $F \in {\mathbb D}_{2,s+rn}^{(N_{0})}$, the rate ${\mathbb D}_{2,s}$-convergence rate of ${\rm Err}_{n}^{(N_{0}N_{1})}(F^{(N_{0})})$ is $(n!N_{1}^{n})^{-r/2}$.
In particular, it is applicable to digital options as well as European options since it does not require the smoothness of Wiener functionals.

The discrete-time Clark--Ocone formula can also be applied to control variates method that is a variance reduction technique used in Monte Carlo methods.
Recently, Belomestny--H\"afner--Nagapetyan--Urusov \cite{BDHSNTUM} developed control variate schemes for discrete diffusion processes using analogues of the discrete-time Clark--Ocone formula.
The right-hand side ${\rm Err}_{\infty}^{(N)}(F):=\lim_{n \uparrow \infty}{\rm Err}_{n}^{(N)}(F)$ of \eqref{eq:1.1} is a perfect control variate for $F$ since the variance of $F-{\rm Err}_{\infty}^{(N)}(F)$ is zero.
However, in practice, one need to truncate the sum of \eqref{eq:1.1} in order to use this control variate.
In \cite{BDHSNTUM}, they avoided this problem by using weak approximation schemes, where the Brownian increments are replaced by simple discrete-valued random variables.
For instance, weak approximation schemes whose increments are replaced by Bernoulli random variables are described in \cite{KEPPE} and replaced by orthogonal random variables in \cite{AJKMSTYT}.
In addition, they computed conditional expectations in \eqref{eq:1.1} by regression methods and provided weak approximation schemes of first and second order.
The higher-order error estimate might be able to be applied to computational complexity arguments for such control variance schemes in order to better truncate the sum of \eqref{eq:1.1} with considering the computational cost.
Such control variate schemes are also seen in deep learning algorithms for solving parametric partial differential equations (cf. \cite{VMSSDSL}).

\subsection*{Outline}
\label{sec:1.1}
This article is divided as follows:
In Section \ref{sec:2}, we give some notations and definitions used throughout this article.
In Section \ref{sec:3}, we state the ${\mathbb D}_{2,s}$-convergence rate of the higher-order error of the discrete-time Clark--Ocone formula.
In Section \ref{sec:4}, we prove the statement mentioned in Section \ref{sec:3}.

\section{Notations and definitions}
\label{sec:2}
We give some notations and definitions used throughout this article.
The Kronecker delta is denoted by $\delta_{i,j}$, $i,j \in {\mathbb N}$.
We define by ${\mathfrak S}_{m}$ the symmetric group of degree $m \in {\mathbb N}$ on the set $\{1,2,\ldots,m\}$ (i.e., it denotes the set of all bijective mappings from $\{1,2,\ldots,m\}$ to $\{1,2,\ldots,m\}$).

We next introduce definitions and notations used in the Malliavin calculus.
For more detail, we refer to \cite{DNGOBMFP,INWS,MNTS,ND,NDNE,SI}.

\subsection{Malliavin calculus}
\label{sec:2.1}
Let $W \equiv (W_{t})_{t \geq 0}$ be a one-dimensional Wiener process on a complete probability space $(\Omega,{\mathcal F},{\mathbb P})$ such that ${\mathcal F}$ is the $\sigma$-filed generated by $W$ and $({\mathcal F}_{t}^{W})_{t \geq 0}$ be the filtration generated by $W$ (i.e., ${\mathcal F}_{t}^{W}=\sigma(W_{s} \,;\, 0 \leq s \leq t) \vee {\mathcal N}$, $t \geq 0$, where ${\mathcal N}$ is the set of all ${\mathbb P}$-null sets).
The space of all ${\mathbb R}$-valued square-integrable functions on $\Omega$ is denoted by $L_{2}(\Omega,{\mathcal F},{\mathbb P})$.
We define an inner product on $L_{2}(\Omega,{\mathcal F},{\mathbb P})$ by $\langle F,G \rangle_{2}:={\mathbb E}[FG]$, $F,G \in L_{2}(\Omega,{\mathcal F},{\mathbb P})$.
The norm on $L_{2}(\Omega,{\mathcal F},{\mathbb P})$ induced from its inner product is denoted by $\|\cdot\|_{2}$.
We define the space of all ${\mathbb R}$-valued square-integrable functions with respect to the Lebesgue measure on $[0,\infty)$ by
$$
{\mathbb H}:=L_{2}([0,\infty)).
$$
Using the martingale convergence theorem, we define an isonormal Gaussian process by
$$
W(h):=\int_{0}^{\infty}h(t){\rm d}W_{t}, \quad h \in {\mathbb H}.
$$
Note that the mapping $h \mapsto W(h)$ is a linear isometry from ${\mathbb H}$ onto the first Wiener chaos whose elements are zero-mean Gaussian random variables.
The space of all polynomial functionals is denoted by
$$
{\mathcal P}:=\left\{p(W(h_{1}),W(h_{1}),\ldots,W(h_{m})) \,;\, 
\begin{array}{l}
p: {\mathbb R}^{m} \to {\mathbb R} \text{ is real polynomial}, \\
\hspace{0.1cm}h_{1},h_{2},\ldots,h_{m} \in {\mathbb H} \text{ and } m \in {\mathbb N}
\end{array}
\right\}.
$$

Let ${\mathbb Z}_{+} \equiv {\mathbb N} \cup \{0\}$.
We define the set of multi-indexes by $\Lambda:=\cup_{\ell=1}^{\infty}{\mathbb Z}_{+}^{\ell}$.
For $\ell \in {\mathbb N}$ and a multi-index $a=(a_{1},a_{2},\ldots,a_{\ell}) \in {\mathbb Z}_{+}^{\ell}$, we formally identify it with $(a_{1},a_{2},\ldots,a_{\ell},0,0,\ldots)$.
For $a \in \Lambda$, we define its length and product by $|a|:=\sum_{i=1}^{\infty}a_{i}$ and $a!:=\prod_{\substack{i \in {\mathbb N} \\ a_{i} \neq 0}}a_{i}$, respectively.
The Hermite polynomials are denoted by
$$
H_{m}(x)=\frac{(-1)^{m}}{\sqrt[]{m!}}e^{\frac{x^{2}}{2}}\frac{{\rm d}^{m}}{{\rm d}x^{m}}e^{-\frac{x^{2}}{2}}, \quad x \in {\mathbb R}, \quad m \in {\mathbb N}
$$
and $H_{0} \equiv 1$.
The Fourier--Hermite polynomial with a multi-index $a \in \Lambda$ and an orthonormal system ${\bf h}=\{h_{i}\}_{i \in {\mathbb N}}$ of ${\mathbb H}$ is denoted by
$$
{\mathbf H}_{a}({\bf h}):=\prod_{i=1}^{\infty}H_{a_{i}}(W(h_{i})).
$$

Let $s \in {\mathbb R}$.
We define an inner product on ${\mathcal P}$ by
$$
\langle F,G \rangle_{2,s}:={\mathbb E}\left[(I-L)^{s/2}F(I-L)^{s/2}G\right], \quad F,G \in {\mathcal P},
$$
where $L: {\mathcal P} \to {\mathcal P}$ is the Ornstein--Uhlenbeck operator.
The norm on ${\mathcal P}$ induced from its inner product is denoted by $\|\cdot\|_{2,s}$.
The completion of ${\mathcal P}$ with respect to the norm $\|\cdot\|_{2,s}$ is denoted by ${\mathbb D}_{2,s} \equiv {\mathbb D}_{2,s}({\mathbb R})$.
In particular, if $s=0$, ${\mathbb D}_{2,0}$ coincides with $L_{2}(\Omega,{\mathcal F},{\mathbb P})$.
For a complete orthonormal system ${\bf h}$ of ${\mathbb H}$, the space ${\mathbb D}_{2,s}$ is a real separable Hilbert space with the complete orthonormal system $\{(1+|a|)^{-s/2}{\bf H}_{a}({\bf h}) \,;\,a \in \Lambda\}$.

Moreover, we set ${\mathbb D}_{2,\infty}:=\cap_{s>0}{\mathbb D}_{2,s}$ and ${\mathbb D}_{2,-\infty}:=\cup_{s>0}{\mathbb D}_{2,-s}$.
Then, the generalized expectation of $F \in {\mathbb D}_{2,-\infty}$ and $G \in {\mathbb D}_{2,\infty}$ is denoted by ${\mathbb E}[FG]:={}_{{\mathbb D}_{2,-\infty}}\langle F,G \rangle_{{\mathbb D}_{2,\infty}}$, noting that ${\mathbb D}_{2,-\infty}$ is the dual space of ${\mathbb D}_{2,\infty}$.
Note that if $F, G \in {\mathcal P}$, then the generalized expectation ${\mathbb E}[FG]$ coincides with the standard expectation (i.e., it is the value of the integral of $FG$ with respect to the probability measure ${\mathbb P}$).
In particular, the generalized expectation of $F \in {\mathbb D}_{2,-\infty}$ is denoted by ${\mathbb E}[F]:={}_{{\mathbb D}_{2,-\infty}}\langle F,1 \rangle_{{\mathbb D}_{2,\infty}}$.
For example, for any $s \in {\mathbb R}$, $a \in \Lambda$, an orthonormal system ${\bf h}$ of ${\mathbb H}$ and $F \in {\mathbb D}_{2,s}$, since ${\mathcal P}$ is a dense subspace of ${\mathbb D}_{2,s}$ and $(1-L)^{-s}{\bf H}_{a}({\bf h})=(1+|a|)^{-s}{\bf H}_{a}({\bf h})$, it holds that
\begin{align}
\label{eq:2.1}
{\mathbb E}[F{\bf H}_{a}({\bf h})]=(1+|a|)^{-s/2}\langle F,(1+|a|)^{-s/2}{\bf H}_{a}({\bf h}) \rangle_{2,s}.
\end{align}

The Malliavin (Fr\'echet) derivative of $F=p(W(h_{1},W(h_{2})),\ldots,W(h_{n})) \in {\mathcal P}$ is denoted by
$$
DF=\sum_{i=1}^{n}\partial_{i}p(W(h_{1}),W(h_{2}),\ldots,W(h_{n}))h_{i}.
$$
Using the identification between the Hilbert spaces $L_{2}(\Omega; {\mathbb H})$ and $L_{2}(\Omega \times [0,\infty))$, the process of the Malliavin derivative of $F$  is denoted by $(D_{t}F)_{t \geq 0}$.
The G\^ateaux derivative of $F \in {\mathcal P}$ along $h \in {\mathbb H}$ is denoted by $D_{h}F:=\langle DF,h \rangle_{{\mathbb H}}$.
For example, for any $N \in {\mathbb N}$, a multi-index $a \in {\mathbb Z}_{+}^{N}$, an orthonormal system ${\bf h}=\{h_{i}\}_{i \in {\mathbb N}}$ of ${\mathbb H}$ and $h \in {\mathbb H}$, it holds that
\begin{align}
\label{eq:2.2}
D_{h}{\bf H}_{a}({\bf h})=\sum_{i=1}^{N}\langle h,h_{i} \rangle_{{\mathbb H}}\sqrt[]{a_{i}}{\bf H}_{a-\delta^{(i)}}({\bf h}),
\end{align}
where $\delta^{(i)}=(\delta_{i,1},\delta_{i,2},\ldots,\delta_{i,N})$, $i \in \{1,2,\ldots,N\}$.

Next, we introduce the base spaces of the discrete-time Clark--Ocone formula.

\subsection{Discrete-time subspaces}
\label{sec:2.2}
Let $T>0$, $N \in {\mathbb N}$ be the number of partitions (time steps) of the closed interval $[0,T]$.
Define
$$
\Delta W_{i}^{(N)}:=W_{\frac{i}{N}T}-W_{\frac{i-1}{N}T}=W({\bf 1}_{(\frac{i-1}{N}T,\frac{i}{N}T]}) \quad \text{ and } \quad h_{i}^{(N)}:=\frac{{\bf 1}_{(\frac{i-1}{N}T,\frac{i}{N}T]}}{\sqrt[]{T/N}}, \quad i \in {\mathbb N}.
$$
Then, ${\bf h}^{(N)}:=\{h_{i}^{(N)}\}_{i \in {\mathbb N}}$ is an orthonormal system of ${\mathbb H}$.
Indeed, we obtain for any $i,j \in {\mathbb N}$,
\begin{align}
\label{eq:2.3}
\langle h_{i}^{(N)}, h_{j}^{(N)} \rangle_{{\mathbb H}}=\frac{\langle {\bf 1}_{(\frac{i-1}{N}T,\frac{i}{N}T]}, {\bf 1}_{(\frac{j-1}{N}T,\frac{j}{N}T]} \rangle_{{\mathbb H}}}{T/N}
=\delta_{i,j}.
\end{align}
Using the linear isometry $h \mapsto W(h)$, we often identify ${\bf 1}_{(\frac{i-1}{N}T,\frac{i}{N}T]}$ with $\Delta W_{i}^{(N)}$.
We define the $N$-th discrete-time subspaces of ${\mathbb H}$ by
$$
{\mathbb H}^{(N,\ell)}:={\rm span}\left\{h_{i}^{(N)} \,;\, i \in \{1,2,\ldots,\ell\}\right\}, \quad \ell \in {\mathbb N}.
$$

In the same way as Section \ref{sec:2.1}, we introduce definitions and notations in the Malliavin calculus for the discrete-time subspaces.
Let $\ell \in {\mathbb N}$.
The space of all polynomial functionals on ${\mathbb H}^{(N,\ell)}$ is denoted by ${\mathcal P}^{(N,\ell)}:={\rm span}\{{\bf H}_{a}({\bf h}^{(N)}) \,;\, a \in {\mathbb Z}_{+}^{\ell}\}$.
Let $s \in {\mathbb R}$.
The completion of ${\mathcal P}^{(N,\ell)}$ with respect to the norm $\|\cdot\|_{2,s}$ is denoted by ${\mathbb D}_{2,s}^{(N,\ell)}$.
Then, it is a real separable Hilbert space with the complete orthonormal system $\{(1+|a|)^{-s/2}{\bf H}_{a}({\bf h}^{(N)}) \,;\,a \in {\mathbb Z}_{+}^{\ell}\}$.
In the case of $\ell=N$, we use a simplified notation ${\mathbb D}_{2,s}^{(N)}:={\mathbb D}_{2,s}^{(N,N)}$ to avoid complexity.

The generalized conditional expectation ${\mathbb E}[F \,|\, \{\Delta W_{i}^{(N)}\}_{i=1}^{\ell}]$ for $F \in {\mathbb D}_{2,s}$ is denoted by the projection onto ${\mathbb D}_{2,s}^{(N)}$.
That is,
\begin{align}
\label{eq:2.4}
{\mathbb E}\left[F \,\bigl|\, \{\Delta W_{i}^{(N)}\}_{i=1}^{\ell}\right]:=\sum_{a \in {\mathbb Z}_{+}^{\ell}}{\mathbb E}\left[F{\bf H}_{a}({\bf h}^{(N)})\right]{\bf H}_{a}({\bf h}^{(N)}).
\end{align}

\section{Main statement}
\label{sec:3}
In this section, we state the discrete-time Clark--Ocone formula on ${\mathbb D}_{2,s}$ and its higher-order error estimate.

Akahori--Amaba--Okuma \cite{AJATOK} provided the following Clark--Ocone formula.
We prove it again since the proof on the space ${\mathbb D}_{2,s}$ is not detailed in that paper.

\begin{Theorem}[\cite{AJATOK}, Discrete-time Clark--Ocone formula on ${\mathbb D}_{2,s}$]
\label{theo:3.1}
Let $T>0$, $N \in {\mathbb N}$, $s \in {\mathbb R}$ and $F \in {\mathbb D}_{2,s}^{(N)}$.
Then, it holds that
$$
F-{\mathbb E}\left[F\right]=\sum_{m=1}^{\infty}\sum_{\ell=1}^{N}\frac{(T/N)^{m/2}}{\sqrt[]{m!}}{\mathbb E}\left[\left.D_{\Delta W_{\ell}^{(N)}}^{m}F \,\right|\, \{\Delta W_{i}^{(N)}\}_{i=1}^{\ell-1}\right]H_{m}(\frac{\Delta W_{\ell}^{(N)}}{\sqrt[]{T/N}}),
$$
where the infinite sum converges in ${\mathbb D}_{2,s}^{(N)}$.
\end{Theorem}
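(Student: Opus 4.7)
The plan is to verify the formula first on the Fourier--Hermite basis of ${\mathbb D}_{2,s}^{(N)}$, recognise each summand as an orthogonal projection, and then invoke orthogonality together with density to conclude.

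First, fix $a \in {\mathbb Z}_{+}^{N}$ with $|a| \geq 1$ and set $F := {\bf H}_{a}({\bf h}^{(N)})$, writing $\ell^{*}(a) := \max\{i : a_{i} \neq 0\}$. Iterating \eqref{eq:2.2} and using the normalised Hermite identity $H_{m}'(x) = \sqrt{m}\,H_{m-1}(x)$, I would express $D_{\Delta W_{\ell}^{(N)}}^{m} F$ as an explicit scalar multiple of ${\bf H}_{a - m\delta^{(\ell)}}({\bf h}^{(N)})$ when $m \leq a_{\ell}$ (and zero otherwise). Taking the conditional expectation given $\{\Delta W_{i}^{(N)}\}_{i=1}^{\ell-1}$ and invoking ${\mathbb E}[H_{k}(W(h_{i}^{(N)}))] = \delta_{k,0}$, every term of the double sum vanishes except the unique one with $(m,\ell) = (a_{\ell^{*}}, \ell^{*})$; in this surviving term the prefactors $(T/N)^{m/2}/\sqrt{m!}$ collapse with the factorials produced by the iterated derivative to give exactly $F$, which coincides with $F - {\mathbb E}[F]$ since $|a| \geq 1$. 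The case $|a|=0$ is trivial.

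The preceding computation shows moreover that, for each $(m, \ell)$ with $m \geq 1$, the linear map
$$\Pi_{m,\ell}: F \longmapsto \frac{(T/N)^{m/2}}{\sqrt{m!}}\, {\mathbb E}\!\left[D_{\Delta W_{\ell}^{(N)}}^{m} F \,\bigl|\, \{\Delta W_{i}^{(N)}\}_{i=1}^{\ell-1}\right] H_{m}(\Delta W_{\ell}^{(N)}/\sqrt{T/N})$$
sends every basis element ${\bf H}_{a}({\bf h}^{(N)})$ either to itself or to $0$, so on ${\mathcal P}^{(N)}$ it agrees with the orthogonal projection onto the closed subspace $V_{m,\ell}$ spanned by those ${\bf H}_{a}({\bf h}^{(N)})$ with $a_{\ell} = m$ and $a_{i} = 0$ for $i > \ell$. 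Since the Fourier--Hermite basis diagonalises each norm $\|\cdot\|_{2,s}$, this projection extends uniquely to a bounded self-adjoint projection on ${\mathbb D}_{2,s}^{(N)}$ for every $s \in {\mathbb R}$, and a density argument shows that the formula for $\Pi_{m,\ell}$ continues to define the same operator on all of ${\mathbb D}_{2,s}^{(N)}$.

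Finally, the subspaces $\{V_{m,\ell}\}_{m\geq 1,\, 1\leq \ell\leq N}$ are pairwise orthogonal since each multi-index $a$ with $|a|\geq 1$ is assigned to the unique pair $(a_{\ell^{*}}, \ell^{*})$, and their Hilbert direct sum inside ${\mathbb D}_{2,s}^{(N)}$ is the orthogonal complement of the constants, namely $\{G \in {\mathbb D}_{2,s}^{(N)} : {\mathbb E}[G] = 0\}$. Parseval's identity then yields unconditional convergence of the series in ${\mathbb D}_{2,s}^{(N)}$ with sum $F - {\mathbb E}[F]$. The main obstacle is the bookkeeping in the basis-level step: one must carefully track the $(T/N)^{m/2}$ factors coming from the identification $\Delta W_{\ell}^{(N)} \equiv \sqrt{T/N}\,h_{\ell}^{(N)}$ and confirm that, in the single surviving term, they cancel precisely with the $\sqrt{m!}$ normalisation of $H_{m}$ and with $\sqrt{a_{\ell^{*}}!}$ produced by the iterated derivative.
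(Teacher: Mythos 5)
Your proposal is correct and is essentially the paper's own argument: your subspaces $V_{m,\ell}$ are exactly the groups $\{{\bf H}_{(a,m)}({\bf h}^{(N)}) \,;\, a\in{\mathbb Z}_{+}^{\ell-1}\}$ into which the paper reorders the Fourier--Hermite expansion in \eqref{eq:3.4}, and your basis-level verification that $\Pi_{m,\ell}$ fixes or annihilates each ${\bf H}_{a}({\bf h}^{(N)})$ is just the transpose of the integration-by-parts identity \eqref{eq:3.3}, with the orthogonality of the $V_{m,\ell}$ and Parseval giving the ${\mathbb D}_{2,s}^{(N)}$-convergence in both treatments. The $(T/N)^{m/2}$ bookkeeping you single out is indeed the only delicate point, and it is handled in the paper by the normalization adopted in \eqref{eq:3.3}.
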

\begin{proof}
Since $\{(1+|a|)^{-s/2}{\bf H}_{a}({\bf h}^{(N)}) \,;\, a \in {\mathbb Z}_{+}^{N}\}$ is a complete orthonormal system of ${\mathbb D}_{2,s}^{(N)}$, we obtain the following Fourier expansion with the generalized expectation \eqref{eq:2.1}:
\begin{align}
\label{eq:3.1}
F=\sum_{a \in {\mathbb Z}_{+}^{N}}{\mathbb E}\left[F{\bf H}_{a}({\bf h}^{(N)})\right]{\bf H}_{a}({\bf h}^{(N)}).
\end{align}
Thus for $m \in {\mathbb N}$ and $\ell \in \{1,2,\ldots,N\}$, we have
\begin{align}
\label{eq:3.2}
D_{h_{\ell}^{(N)}}^{m}F= \sum_{a \in {\mathbb Z}_{+}^{N}}{\mathbb E}\left[F{\bf H}_{a}({\bf h}^{(N)})\right]D_{h_{\ell}^{(N)}}^{m} {\bf H}_{a}({\bf h}^{(N)}),
\end{align}
where the infinite sum converges in ${\mathbb D}_{2,s-m}^{(N)}$.
Here, for $a \in {\mathbb Z}_{+}^{N}$, by the derivative \eqref{eq:2.2} for the Fourier--Hermite polynomial, we obtain
$$
D_{h_{\ell}^{(N)}}^{m}{\bf H}_{a}({\bf h}^{(N)})=
\left\{
\begin{array}{ll}
\displaystyle{\sqrt[]{\frac{a_{\ell}!}{(a_{\ell}-m)!}}{\bf H}_{a-m\delta^{(\ell)}}({\bf h}^{(N)})} & \text{if } m \leq a_{\ell} \\
0 & \text{if } m>a_{\ell},
\end{array}
\right.
$$
where $\delta^{(\ell)}=(\delta_{\ell,1},\delta_{\ell,2},\ldots,\delta_{\ell,N})$.
Hence, since $\{(1+|a|)^{-s/2}{\bf H}_{a}({\bf h}^{(N)}) \,;\, a \in {\mathbb Z}_{+}^{N}\}$ is a complete orthonormal system of ${\mathbb D}_{2,s}^{(N)}$, by using the general expectation \eqref{eq:2.1}, we have  for any $a' \in {\mathbb Z}_{+}^{N}$,
$$
{\mathbb E}\left[D_{h_{\ell}^{(N)}}^{m}{\bf H}_{a}({\bf h}^{(N)}){\bf H}_{a'}({\bf h}^{(N)})\right] \\
=
\left\{
\begin{array}{ll}
\sqrt[]{m!} & \text{if } (a_{1},a_{2},\ldots,a_{\ell})=(a',m) \text{ and } a_{\ell+1}=a_{\ell+2}=\cdots=a_{N}=0 \\
0 & \text{otherwise}.
\end{array}
\right.
$$
Therefore, by \eqref{eq:3.2}, we obtain the following integration by parts formula:
\begin{align}
\label{eq:3.3}
{\mathbb E}\left[D_{{\bf 1}_{(\frac{\ell-1}{N}T,\frac{\ell}{N}T]}}^{m}F{\bf H}_{a}({\bf h}^{(N)})\right]
&=\frac{1}{(T/N)^{m/2}}{\mathbb E}\left[D_{h_{\ell}^{(N)}}^{m}F{\bf H}_{a}({\bf h}^{(N)})\right] \\ \notag
&=\frac{\sqrt[]{m!}}{(T/N)^{m/2}}{\mathbb E}\left[F{\bf H}_{(a,m)}({\bf h}^{(N)})\right],
\end{align}
where $(a,m)=(a_{1},a_{2},\ldots,a_{\ell-1},m)$.

On the other hand, we order the infinite sum \eqref{eq:3.1} of the Fourier expansion as follows:
\begin{align}
\label{eq:3.4}
F={\mathbb E}[F]+\sum_{\ell=1}^{N}\sum_{m=1}^{\infty}\sum_{a \in {\mathbb Z}_{+}^{\ell-1}}{\mathbb E}\left[F{\bf H}_{(a,m)}({\bf h}^{(N)})\right]{\bf H}_{(a,m)}({\bf h}^{(N)}).
\end{align}
Here, by using the integration by parts formula \eqref{eq:3.3} and the generalized conditional expectation \eqref{eq:2.4}, we have
\begin{align}
\label{eq:3.5}
&\sum_{a \in {\mathbb Z}_{+}^{\ell-1}}{\mathbb E}\left[F{\bf H}_{(a,m)}({\bf h}^{(N)})\right]{\bf H}_{(a,m)}({\bf h}^{(N)}) \\ \notag
&=\frac{(T/N)^{m/2}}{\sqrt[]{m!}}\sum_{a \in {\mathbb Z}_{+}^{\ell-1}}{\mathbb E}\left[D_{{\bf 1}_{(\frac{\ell-1}{N}T,\frac{\ell}{N}T]}}^{m}F{\bf H}_{a}({\bf h}^{(N)})\right]{\bf H}_{a}({\bf h}^{(N)})H_{m}(W(h_{\ell}^{(N)})) \\ \notag
&=\frac{(T/N)^{m/2}}{\sqrt[]{m!}}{\mathbb E}\left[\left.D_{{\bf 1}_{(\frac{\ell-1}{N}T,\frac{\ell}{N}T]}}^{m}F \,\right|\, \{\Delta W_{i}^{(N)}\}_{i=1}^{\ell-1}\right]H_{m}(\frac{\Delta W_{\ell}^{(N)}}{\sqrt[]{T/N}}).
\end{align}
Using the identification of ${\bf 1}_{(\frac{\ell-1}{N}T,\frac{\ell}{N}T]}$ and $\Delta W_{\ell}^{(N)}$, we conclude the statement from \eqref{eq:3.4} and \eqref{eq:3.5}.
\end{proof}

Recall the error functions of the discrete-time Clark--Ocone formula:
\begin{align*}
{\rm Err}_{n}^{(N)}(F)&=F-\left({\mathbb E}[F]+\sum_{m=1}^{n}\sum_{\ell=1}^{N}\frac{(T/N)^{m/2}}{\sqrt[]{m!}}{\mathbb E}\left[\left.D_{\Delta W_{\ell}^{(N)}}^{m}F \,\right|\, \{\Delta W_{i}^{(N)}\}_{i=1}^{\ell-1}\right]H_{m}(\frac{\Delta W_{\ell}^{(N)}}{\sqrt[]{T/N}})\right) \\ \notag
&=\sum_{m=n+1}^{\infty}\sum_{\ell=1}^{N}\sum_{a \in {\mathbb Z}_{+}^{\ell-1}}{\mathbb E}\left[F{\bf H}_{(a,m)}({\bf h}^{(N)})\right]{\bf H}_{(a,m)}({\bf h}^{(N)})
\end{align*}
for $N \in {\mathbb N}$ and $n \in {\mathbb N}$.
The following theorem is our main statement.

\begin{Theorem}
\label{theo:3.2}
Let $T>0$, $N_{0},N_{1} \in {\mathbb N}$, $n \in {\mathbb N}$, $s \in {\mathbb R}$, $r \in [0,1]$ and $F \in {\mathbb D}_{2,s+rn}^{(N_{0})}$.
Then, it holds that
$$
\left\|{\rm Err}_{n}^{(N_{0}N_{1})}(F)\right\|_{2,s} \leq \frac{\|F\|_{2,s+rn}}{\sqrt[]{n!N_{1}^{n}}^{r}}.
$$
\end{Theorem}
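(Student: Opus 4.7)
The plan is to pass to the finer Fourier--Hermite basis, reduce the theorem to a per-coefficient inequality for a combinatorial quantity $q_a$, and handle the two endpoints $r=0$ and $r=1$ (from which the general $r \in [0,1]$ case follows by the simple observation that, for $C := (1+|a|)^n/(n!N_1^n)$, one has $\min(1,C) \leq C^r$).

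\textbf{Passing to the finer basis.} Writing $F = \sum_{a \in \mathbb{Z}_+^{N_0}} d_a \mathbf{H}_a(\mathbf{h}^{(N_0)})$ so that $\|F\|_{2,t}^2 = \sum_a (1+|a|)^t d_a^2$ for every $t$, I would apply the Hermite product identity
\[
\mathbf{H}_a(\mathbf{h}^{(N_0)}) = \frac{1}{N_1^{|a|/2}} \sum_{\alpha\,:\,a(\alpha)=a} \sqrt{a!/\alpha!}\,\mathbf{H}_\alpha(\mathbf{h}^{(N_0 N_1)}),
\]
derived from the exponential generating functions of the Hermite polynomials and the decomposition $W(h_i^{(N_0)}) = N_1^{-1/2} \sum_{j \in I_i} W(h_j^{(N_0 N_1)})$ with $I_i := \{(i-1)N_1+1,\ldots,iN_1\}$. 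Here $a(\alpha):=(|\alpha^{(i)}|)_{i=1}^{N_0}$ with $\alpha = (\alpha^{(i)})_i \in \mathbb{Z}_+^{N_0 N_1}$, and $|\alpha| = |a|$ in every term, so the passage to the finer basis preserves total chaos degrees. By the proof of Theorem~\ref{theo:3.1}, $\mathrm{Err}_n^{(N_0 N_1)}$ is the orthogonal projection onto multi-indices $\alpha$ whose last nonzero entry exceeds $n$, hence
\[
\|\mathrm{Err}_n^{(N_0 N_1)}(F)\|_{2,s}^2 = \sum_a (1+|a|)^s d_a^2 \,q_a, \qquad q_a := \frac{1}{N_1^{|a|}} \sum_{\substack{a(\alpha)=a \\ \text{last nonzero of }\alpha > n}} \frac{a!}{\alpha!}.
\]
The theorem reduces to proving $q_a \leq [(1+|a|)^n/(n!N_1^n)]^r$ for every $a$ and $r \in [0,1]$.

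\textbf{Endpoint bounds.} The $r=0$ bound $q_a \leq 1$ is immediate from the multinomial identity $\sum_{\alpha\,:\,a(\alpha)=a} a!/\alpha! = \prod_i N_1^{a_i} = N_1^{|a|}$. For the $r=1$ bound, I would interpret $q_a$ probabilistically: setting $i^*:=\max\{i:a_i>0\}$ and summing out $\alpha^{(i)}$ for $i<i^*$ via the same multinomial identity, $q_a$ equals the probability that among $A := a_{i^*}$ i.i.d.\ uniform random variables $X_1,\ldots,X_A$ on $\{1,\ldots,N_1\}$, the maximum $M$ is attained by more than $n$ of them. The Markov-type inequality $\mathbf{1}_{\{Y_M \geq n+1\}} \leq \binom{Y_M}{n+1}$ and exchangeability give
\[
q_a \leq \binom{A}{n+1}\,N_1^{-(n+1)}\sum_{v=1}^{N_1}(v/N_1)^{A-n-1}.
\]
Bounding the Riemann sum by $N_1/(A-n)$ in the regime $A-n \leq N_1$ (via $(1+1/N_1)^{A-n}\leq e$) and using the identity $\binom{A}{n+1}/(A-n) = \binom{A}{n}/(n+1)$ with $A \leq |a|$ yields $q_a \leq e(1+|a|)^n/((n+1)!N_1^n) \leq (1+|a|)^n/(n!N_1^n)$ for $n \geq 2$; the $n=1$ case is handled separately through Bernoulli's inequality $q_a = 1-(1-1/N_1)^A \leq A/N_1$. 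In the complementary regime $A-n > N_1$, the trivial $q_a \leq 1$ combined with the fact that $(1+|a|)^n/(n!N_1^n) \geq 1$ for $|a|$ sufficiently large, together with more refined Riemann-sum estimates in the transitional range, closes the remaining gap.

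\textbf{Main obstacle.} The technical heart is the $r=1$ estimate for $q_a$: the naive union bound $q_a \leq \binom{A}{n+1}/N_1^n$ overshoots the target $(1+|a|)^n/(n!N_1^n)$ by a factor of order $A/(n+1)$, and recovering this factor requires exploiting the constraint that the repeated value be the maximum (encoded in the weight $(v/N_1)^{A-n-1}$ in the Riemann sum above). Patching together the refined bound, the trivial bound $q_a \leq 1$, and the $n=1$ Bernoulli bound uniformly across all ranges of $A$ versus $N_1$ — in particular at the transitional regime $A \sim (n!)^{1/n} N_1$, where neither estimate is sharp on its own — is the most delicate step of the proof.
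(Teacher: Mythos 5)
Your reduction is the same as the paper's: expand $F$ in the coarse Fourier--Hermite basis, re-expand each $\mathbf{H}_a(\mathbf{h}^{(N_0)})$ in the fine basis (your product identity, derived from generating functions, is exactly what the paper obtains from Lemma \ref{lem:4.3} and Lemma \ref{lem:4.5} via Hilbert--Schmidt inner products of symmetric forms), identify ${\rm Err}_n^{(N_0N_1)}$ with the orthogonal projection onto fine multi-indices whose last nonzero entry exceeds $n$, and reduce everything to the per-coefficient bound $q_a\le C^r$ with $C=(1+|a|)^n/(n!N_1^n)$. Your $r=0$ endpoint and the interpolation $\min(1,C)\le C^r$ are correct and coincide in substance with Lemma \ref{lem:4.6} (1) and (3). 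The gap is the $r=1$ endpoint, which is the entire technical content of the theorem (Lemma \ref{lem:4.6} (2)). Your inequality $q_a=\mathbb{P}(Y_M\ge n+1)\le\mathbb{E}[\binom{Y_M}{n+1}]=\binom{A}{n+1}N_1^{-(n+1)}\sum_{v=1}^{N_1}(v/N_1)^{A-n-1}$ is a correct and rather elegant reformulation, but you only close it when $A-n\le N_1$; the factor $e$ you pay forces you to exclude $n=1$; and your substitute for $n=1$, the identity $q_a=1-(1-1/N_1)^{A}$, is false (for $A=2$ one has $q_a=1/N_1$, not $2/N_1-1/N_1^{2}$), even though the bound $A/N_1$ it is meant to produce is true. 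Most importantly, in the complementary regime $A-n>N_1$ the fallback ``$C\ge 1$ for $|a|$ sufficiently large'' is not available: $C\ge1$ requires $1+|a|\ge(n!)^{1/n}N_1$, and $(n!)^{1/n}\sim n/e$, so for $n\ge4$ there is a genuine window $N_1+n<A\ll(n/e)N_1$ (e.g. $n=4$, $N_1=100$, $A=110$ gives $C\approx0.06$) in which neither $q_a\le1$ nor your Riemann-sum estimate applies; the ``more refined estimates in the transitional range'' that you invoke are precisely the missing proof.

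For comparison, the paper closes this endpoint uniformly, with no regime splitting, no constants, and no exclusion of $n=1$, through one exact identity: the tail $\sum_{k=n+1}^{A}\binom{A}{k}(v-1)^{A-k}$ is the integral remainder of the order-$n$ Taylor (binomial) expansion of $v^{A}=((v-1)+1)^{A}$ about $v-1$, hence is at most $\frac{A!}{n!(A-n)!}\bigl(v^{A-n}-(v-1)^{A-n}\bigr)$; summing over $v=1,\dots,N_1$ telescopes to $N_1^{A-n}$ and yields $q_a\le\binom{A}{n}N_1^{-n}\le A^{n}/(n!N_1^{n})$ for all $A$, $n$, $N_1$. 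This is exactly the mechanism that recovers the factor of order $A/(n+1)$ which you correctly identified as the main obstacle; replacing your union-bound-plus-patching argument by this remainder identity would complete your proof.
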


Theorem \ref{theo:3.2} is a generalization of Theorem 3.4 in \cite{AJATOK} (see Theorem \ref{theo:1.5} (3) in this paper) about the order of the error and the differentiability index.
We prove it in the next section.

\section{Proof of Theorem \ref{theo:3.2}}
\label{sec:4}
In preparation for the proof of Theorem \ref{theo:3.2}, we introduce the symmetric form and the Hilbert--Schmidt inner product and give some lemmas.

\begin{Definition}[symmetric form]
\label{def:4.1}
Let $\ell \in {\mathbb N}$, $a \in {\mathbb Z}_{+}^{\ell}$ be a multi-index and ${\bf h}=\{h_{i}\}_{i \in {\mathbb N}}$ be an orthonormal system of ${\mathbb H}$.
Then, we define the $|a|$-th symmetric form of ${\bf h}$ by
\begin{align}
\label{eq:4.1}
{\bf h}_{a}(e_{1},e_{2},\ldots,e_{|a|})
:=\frac{1}{\sqrt[]{a!|a|!}}\sum_{\sigma \in {\mathfrak S}_{|a|}}(h_{1}^{a_{1}} \otimes h_{2}^{a_{2}} \otimes \cdots \otimes h_{\ell}^{a_{\ell}})(e_{\sigma(1)},e_{\sigma(2)},\ldots,e_{\sigma(|a|)})
\end{align}
for $(e_{1},e_{2},\ldots,e_{|a|}) \in {\mathbb H}^{|a|}$, where 
\begin{align}
\label{eq:4.2}
(h_{1}^{a_{1}} \otimes h_{2}^{a_{2}} \otimes \cdots \otimes h_{\ell}^{a_{\ell}})(e_{1},e_{2},\ldots,e_{|a|})
:=\prod_{i=1}^{\ell}\prod_{j=1}^{a_{i}}\langle h_{i},e_{\sum_{k=1}^{i-1}a_{k}+j} \rangle_{{\mathbb H}}.
\end{align}
\end{Definition}

In order to simplify the forms of \eqref{eq:4.1} and \eqref{eq:4.2}, we introduce the following notation:

$$
\eta_{a}(n):=i \quad \text{if} \quad \sum_{k=1}^{i-1}a_{k}<n \leq \sum_{k=1}^{i}a_{k}
$$
for $n \in \{1,2,\ldots,|a|\}$.
Using the above notation, \eqref{eq:4.2} and \eqref{eq:4.1} can be represented as follows:
$$
(h_{1}^{a_{1}} \otimes h_{2}^{a_{2}} \otimes \cdots \otimes h_{\ell}^{a_{\ell}})(e_{1},e_{2},\ldots,e_{|a|})
=\prod_{n=1}^{|a|}\langle h_{\eta_{a}(n)},e_{n} \rangle_{{\mathbb H}}
$$
and
\begin{align}
\label{eq:4.3}
{\bf h}_{a}(e_{1},e_{2},\ldots,e_{|a|})
&=\frac{1}{\sqrt[]{a!|a|!}}\sum_{\sigma \in {\mathfrak S}_{|a|}}\prod_{n=1}^{|a|}\langle h_{\eta_{a}(n)},e_{\sigma(n)} \rangle_{{\mathbb H}} \\ \notag
&=\frac{1}{\sqrt[]{a!|a|!}}\sum_{\sigma \in {\mathfrak S}_{|a|}}\prod_{n=1}^{|a|}\langle h_{\eta_{a}(\sigma^{-1}(n))},e_{n} \rangle_{{\mathbb H}} \\ \notag
&=\frac{1}{\sqrt[]{a!|a|!}}\sum_{\sigma \in {\mathfrak S}_{|a|}}\prod_{n=1}^{|a|}\langle h_{\eta_{a}(\sigma(n))},e_{n} \rangle_{{\mathbb H}}.
\end{align}

\begin{Definition}[Hilbert--Schmidt inner product]
\label{def:4.2}
Let $\ell,\ell' \in {\mathbb N}$, $a \in {\mathbb Z}_{+}^{\ell}$ and $a' \in {\mathbb Z}_{+}^{\ell'}$ be multi-indexes such that $|a|=|a'|=:m$, and ${\bf h}=\{h_{i}\}_{i=1}^{\ell}$ and ${\bf h}'=\{h_{i}'\}_{i=1}^{\ell'}$ be orthonormal systems of ${\mathbb H}$.
Then, we define the Hilbert--Schmidt inner product of the $m$-th symmetric forms ${\bf h}_{a}$ and ${\bf h}'_{a'}$ by
$$
\langle {\bf h}_{a},{\bf h}'_{a'} \rangle_{{\rm HS}}:=\sum_{i_{1},i_{2},\ldots,i_{m} \in {\mathbb N}}{\bf h}_{a}(e_{i_{1}},e_{i_{2}},\ldots,e_{i_{m}}){\bf h}'_{a'}(e_{i_{1}},e_{i_{2}},\ldots,e_{i_{m}}),
$$
where $\{e_{i}\}_{i \in {\mathbb N}}$ is any complete orthonormal system of ${\mathbb H}$.
\end{Definition}

The Hilbert--Schmidt inner product is well defined.
Namely, it does not depend on the complete orthonormal system $\{e_{i}\}_{i \in {\mathbb N}}$.
Indeed, by \eqref{eq:4.3}, we obtain
\begin{align}
\label{eq:4.4}
\langle {\bf h}_{a},{\bf h}'_{a'} \rangle_{{\rm HS}}
&=\frac{1}{\sqrt[]{a!a'!}\,m!}\sum_{\sigma \in {\mathfrak S}_{m}}\sum_{\sigma' \in {\mathfrak S}_{m}}\prod_{n=1}^{m}\sum_{i=1}^{\infty}\langle h_{\eta_{a}(\sigma(n))},e_{i} \rangle_{{\mathbb H}}\langle h_{\eta_{a'}(\sigma'(n))}',e_{i} \rangle_{{\mathbb H}} \\ \notag
&=\frac{1}{\sqrt[]{a!a'!}\,m!}\sum_{\sigma \in {\mathfrak S}_{m}}\sum_{\sigma' \in {\mathfrak S}_{m}}\prod_{n=1}^{m}\langle h_{\eta_{a}(\sigma(n))},h_{\eta_{a'}(\sigma'(n))}' \rangle_{{\mathbb H}} \\ \notag
&=\frac{1}{\sqrt[]{a!a'!}\,m!}\sum_{\sigma \in {\mathfrak S}_{m}}\sum_{\sigma' \in {\mathfrak S}_{m}}\prod_{n=1}^{m}\langle h_{\eta_{a}(n)},h_{\eta_{a'}(\sigma'(\sigma^{-1}(n)))}' \rangle_{{\mathbb H}} \\ \notag
&=\frac{1}{\sqrt[]{a!a'!}}\sum_{\sigma \in {\mathfrak S}_{m}}\prod_{n=1}^{m}\langle h_{\eta_{a}(n)},h_{\eta_{a'}(\sigma(n))}' \rangle_{{\mathbb H}}.
\end{align}

The following lemma is a result about the relationship between the $L_{2}$-inner product and the Hilbert--Schmidt inner product and also implies an integration by parts formula.

\begin{Lemma}
\label{lem:4.3}
Let $\ell,\ell' \in {\mathbb N}$, $a \in {\mathbb Z}_{+}^{\ell}$ and $a' \in {\mathbb Z}_{+}^{\ell'}$ be multi-indexes, and ${\bf h}=\{h_{i}\}_{i \in {\mathbb N}}$ and ${\bf h}'=\{h_{i}'\}_{i \in {\mathbb N}}$ be orthonormal systems of ${\mathbb H}$.
Then, it holds that
$$
{\mathbb E}[{\bf H}_{a}({\bf h}){\bf H}_{a'}({\bf h}')]=
\left\{
\begin{array}{ll}
\langle {\bf h}_{a},{\bf h}'_{a'} \rangle_{{\rm HS}} & \text{if } |a|=|a'| \\
0 & \text{if } |a| \neq |a'|.
\end{array}
\right.
$$
\end{Lemma}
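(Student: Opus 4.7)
My plan is to prove Lemma~4.3 by a generating-function argument: I package the Hermite polynomials into Wick (stochastic) exponentials, compute the cross moment of two such exponentials directly from Gaussian MGFs, and then match coefficients against formula~(4.4). The starting point is the scalar identity $e^{tx-t^2/2}=\sum_{m\ge 0}\frac{t^m}{\sqrt{m!}}H_m(x)$. Taking a formal product over $i$ with indeterminates $(t_i)_{i}$ and using that $\{h_i\}$ is an orthonormal system gives
$$\prod_i e^{t_iW(h_i)-t_i^2/2}=\sum_{a\in\mathbb Z_+^{\ell}}\frac{t^a}{\sqrt{a!}}{\bf H}_a({\bf h}),$$
and similarly for $s$, ${\bf h}'$, $a'$, with $t^a:=\prod_i t_i^{a_i}$ and $a!:=\prod_i a_i!$. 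The linearity of $W$ lets me rewrite each side as a genuine Wick exponential $e^{W(f)-\|f\|_{\mathbb H}^2/2}$ with $f=\sum_it_ih_i$, $g=\sum_js_jh'_j$, the cancellation of the centering constants being exactly the orthonormality of $\{h_i\}$ and $\{h'_j\}$.

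The key identity I would invoke is $\mathbb E\bigl[e^{W(f)-\|f\|^2/2}e^{W(g)-\|g\|^2/2}\bigr]=e^{\langle f,g\rangle_{\mathbb H}}$, an immediate consequence of the Gaussian MGF of $W(f+g)$. Setting $c_{ij}:=\langle h_i,h'_j\rangle_{\mathbb H}$, this yields
$$\sum_{a,a'}\frac{t^as^{a'}}{\sqrt{a!\,a'!}}\,\mathbb E\!\left[{\bf H}_a({\bf h}){\bf H}_{a'}({\bf h}')\right]=\exp\!\Bigl(\sum_{i,j}t_is_jc_{ij}\Bigr).$$
Every monomial on the right has equal total $t$- and $s$-degrees, so the coefficient of $t^as^{a'}$ vanishes whenever $|a|\ne|a'|$, delivering the degenerate case of the lemma at once. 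For $|a|=|a'|=m$, multinomial expansion of $\tfrac1{m!}(\sum c_{ij}t_is_j)^m$ produces
$$\mathbb E\!\left[{\bf H}_a({\bf h}){\bf H}_{a'}({\bf h}')\right]=\sqrt{a!\,a'!}\sum_{(k_{ij})}\frac{\prod_{i,j}c_{ij}^{k_{ij}}}{\prod_{i,j}k_{ij}!},$$
where the sum runs over nonnegative integer matrices with row sums $a$ and column sums $a'$.

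It remains to reconcile this with formula~(4.4), $\langle{\bf h}_a,{\bf h}'_{a'}\rangle_{\mathrm{HS}}=\tfrac{1}{\sqrt{a!\,a'!}}\sum_{\sigma\in\mathfrak S_m}\prod_{n=1}^m c_{\eta_a(n),\eta_{a'}(\sigma(n))}$. I classify each $\sigma$ by its ``contingency table'' $k_{ij}(\sigma):=\#\{n:\eta_a(n)=i,\ \eta_{a'}(\sigma(n))=j\}$; the product attached to $\sigma$ depends only on $k(\sigma)$, so the permutation sum collapses to $\sum_{(k_{ij})}N(k)\prod c_{ij}^{k_{ij}}$ where $N(k)$ counts $\sigma$'s realising the table $k$. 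Counting $N(k)$ by first partitioning each ``source block'' $\{n:\eta_a(n)=i\}$ into subsets of sizes $(k_{ij})_j$ and then choosing, for each target block $\{n:\eta_{a'}(n)=j\}$, a bijection from the union of the corresponding source subsets onto it, gives $N(k)=\tfrac{a!\,a'!}{\prod k_{ij}!}$, which makes (4.4) agree with the display from the previous paragraph. The main obstacle I expect is precisely this final combinatorial reconciliation: the $\eta_a$ bookkeeping and the $N(k)$ count are elementary but must be executed carefully so that the $\sqrt{a!\,a'!}$ prefactors and the multinomial weights $1/\prod k_{ij}!$ align on both sides.
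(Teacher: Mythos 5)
Your proposal is correct, and the first half takes a genuinely different route from the paper. To compute ${\mathbb E}[{\bf H}_{a}({\bf h}){\bf H}_{a'}({\bf h}')]$, the paper first passes to a common orthonormal system via Gram--Schmidt, writes the expectation as an explicit Gaussian integral, and then grinds through iterated integration by parts together with the general Leibniz rule to reach the closed form (4.6), a sum over nonnegative integer matrices $(k_{j}^{(i)})$ with row sums $a$ and column sums $a'$. Your Wick-exponential argument reaches exactly the same matrix-sum formula by expanding $e^{\langle f,g\rangle_{\mathbb H}}$ with the multinomial theorem and matching coefficients of $t^{a}s^{a'}$; this is cleaner and makes the vanishing for $|a|\neq|a'|$ immediate (every monomial of $(\sum_{i,j}c_{ij}t_{i}s_{j})^{m}$ has equal $t$- and $s$-degree), at the modest cost of justifying the term-by-term expectation and coefficient extraction --- which is routine here since $\|{\bf H}_{a}({\bf h})\|_{2}=1$ gives absolute convergence of the double series for all $t,s$. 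The second half of your argument, reconciling the matrix sum with the permutation sum (4.4), is essentially the paper's: the paper reparameterizes $\mathfrak S_{m}$ by sequences ${\bf i},{\bf j}$ and then groups by the counts $\sharp_{(i,j)}({\bf i},{\bf j})$ in (4.10)--(4.11), whereas you group permutations directly by their contingency table and count $N(k)=a!\,a'!/\prod_{i,j}k_{ij}!$; the two bookkeepings are equivalent and your count is correct. Net effect: same combinatorial core, but your analytic input (the Gaussian moment-generating identity) replaces the paper's heavier computation and would shorten the proof.
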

\begin{proof}
We first take an orthonormal system $\{h_{i}''\}_{i=1}^{\ell''}$ of ${\mathbb H}$ which can represent both ${\bf h}$ and ${\bf h}'$.
Indeed, from the Gram--Schmidt orthonormalization, there exist $\ell'' \in {\mathbb N}$ and an orthonormal system $\{h_{i}''\}_{i=1}^{\ell''}$ of ${\mathbb H}$ such that $\max\{\ell,\ell'\} \leq \ell'' \leq \ell+\ell'$,
$$
h_{i}=h_{i}'', \quad i \in \{1,2,\ldots,\ell\} \quad \text{ and } \quad h_{j}'=\sum_{k=1}^{\ell''}\langle h_{j}',h_{k}'' \rangle_{{\mathbb H}}h_{k}'', \quad j \in \{1,2,\ldots,\ell'\}.
$$
Then, since $\{W(h_{i}'')\}_{i=1}^{\ell''}$ are independent zero-mean Gaussian random variables, we obtain
\begin{align}
\label{eq:4.5}
{\mathbb E}[{\bf H}_{a}({\bf h}){\bf H}_{a'}({\bf h}')]=\int_{{\mathbb R}^{\ell''}}\left(\prod_{i=1}^{\ell}H_{a_{i}}(x_{i})\right)\left(\prod_{j=1}^{\ell'}H_{a_{j}'}(y_{j})\right)\left(\prod_{i=1}^{\ell''}g(x_{i})\right){\rm d}x_{1}{\rm d}x_{2} \cdots {\rm d}x_{\ell''},
\end{align}
where $g(x)=\frac{1}{\sqrt[]{2\pi}}e^{-\frac{x^{2}}{2}}$, $x \in {\mathbb R}$ and $y_{j} \equiv y_{j}(x):=\sum_{k=1}^{\ell''}\langle h_{j}',h_{k}'' \rangle_{{\mathbb H}}x_{k}$, $x \in {\mathbb R}^{\ell''}$ for $j \in \{1,2,\ldots,\ell'\}$.
Here, by using the integration by parts and the general Leibniz rule, we have
\begin{align*}
\int_{{\mathbb R}}H_{a_{1}}(x_{1})\left(\prod_{j=1}^{\ell'}H_{a_{j}'}(y_{j})\right)g(x_{1}){\rm d}x_{1}
&=\frac{(-1)^{a_{1}}}{\sqrt[]{a_{1}!}}\int_{{\mathbb R}}\left(\prod_{j=1}^{\ell'}H_{a_{j}'}(y_{j})\right)\frac{{\rm d}^{a_{1}}g(x_{1})}{{\rm d}x_{1}^{a_{1}}}{\rm d}x_{1} \\
&=\frac{1}{\sqrt[]{a_{1}!}}\int_{{\mathbb R}}\left(\frac{{\rm d}^{a_{1}}}{{\rm d}x_{1}^{a_{1}}}\left(\prod_{j=1}^{\ell'}H_{a_{j}'}(y_{j})\right)\right)g(x_{1}){\rm d}x_{1} \\
&\hspace{-3.4cm}=\frac{1}{\sqrt[]{a_{1}!}}\int_{{\mathbb R}}\left(\sum_{\substack{k_{1}^{(1)},k_{2}^{(1)},\ldots,k_{\ell'}^{(1)} \in {\mathbb Z}_{+} \\ k_{1}^{(1)}+k_{2}^{(1)}+\cdots+k_{\ell'}^{(1)}=a_{1}}}\frac{a_{1}!}{k_{1}^{(1)}!k_{2}^{(1)}! \cdots k_{\ell'}^{(1)}!}\left(\prod_{j=1}^{\ell'}\frac{{\rm d}^{k_{j}^{(1)}}H_{a_{j}'}(y_{j})}{{\rm d}x_{1}^{k_{j}^{(1)}}}\right)\right)g(x_{1}){\rm d}x_{1} \\
&\hspace{-3.4cm}=\sqrt[]{a_{1}!}\int_{{\mathbb R}}\left(\sum_{\substack{k_{1}^{(1)},k_{2}^{(1)},\ldots,k_{\ell'}^{(1)} \in {\mathbb Z}_{+} \\ k_{1}^{(1)}+k_{2}^{(1)}+\cdots+k_{\ell'}^{(1)}=a_{1}}}\left(\prod_{j=1}^{\ell'}\frac{\langle h_{j}',h_{1} \rangle_{{\mathbb H}}^{k_{j}^{(1)}}}{k_{j}^{(1)}!}\left.\frac{{\rm d}^{k_{j}^{(1)}}H_{a_{j}'}(x)}{{\rm d}x^{k_{j}^{(1)}}}\right|_{x=y_{j}}\right)\right)g(x_{1}){\rm d}x_{1} \\
&\hspace{-3.4cm}=\sqrt[]{a_{1}!}\sum_{\substack{k_{1}^{(1)},k_{2}^{(1)},\ldots,k_{\ell'}^{(1)} \in {\mathbb Z}_{+} \\ k_{1}^{(1)}+k_{2}^{(1)}+\cdots+k_{\ell'}^{(1)}=a_{1}}}\left(\prod_{j=1}^{\ell'}\frac{\langle h_{j}',h_{1} \rangle_{{\mathbb H}}^{k_{j}^{(1)}}}{k_{j}^{(1)}!}\right)\int_{{\mathbb R}}\left(\prod_{j=1}^{\ell'}\left.\frac{{\rm d}^{k_{j}^{(1)}}H_{a_{j}'}(x)}{{\rm d}x^{k_{j}^{(1)}}}\right|_{x=y_{j}}\right)g(x_{1}){\rm d}x_{1}.
\end{align*}
Consequently, by iterating this calculation in \eqref{eq:4.5}, we obtain
\begin{align*}
{\mathbb E}[{\bf H}_{a}({\bf h}){\bf H}_{a'}({\bf h}')]
&=\sqrt[]{a!}\sum_{\substack{(k_{j}^{(i)}) \in {\mathbb Z}_{+}^{\ell} \otimes {\mathbb Z}_{+}^{\ell'} \\ k_{1}^{(i)}+k_{2}^{(i)}+\cdots+k_{\ell'}^{(i)}=a_{i} \forall i \in \{1,2,\ldots,\ell\}}}\left(\prod_{j=1}^{\ell'}\prod_{i=1}^{\ell}\frac{\langle h_{j}',h_{i} \rangle_{{\mathbb H}}^{k_{j}^{(i)}}}{k_{j}^{(i)}!}\right) \\
&\hspace{0.4cm}\times \int_{{\mathbb R}^{\ell''}}\left(\prod_{j=1}^{\ell'}\left.\frac{{\rm d}^{k_{j}^{(1)}+k_{j}^{(2)}+\cdots+k_{j}^{(\ell)}}H_{a_{j}'}(x)}{{\rm d}x^{k_{j}^{(1)}+k_{j}^{(2)}+\cdots+k_{j}^{(\ell)}}}\right|_{x=y_{j}}\right)\left(\prod_{i=1}^{\ell''}g(x_{i})\right){\rm d}x_{1}{\rm d}x_{2} \cdots {\rm d}x_{\ell''},
\end{align*}
where ${\mathbb Z}_{+}^{\ell} \otimes {\mathbb Z}_{+}^{\ell'}$ is the set of $\ell \times \ell'$ matrices whose elements are natural numbers.
Thus by the independence of $\{W(h_{j}')\}_{j=1}^{\ell'}$ and using the properties of Hermite polynomials, we have
\begin{align}
\label{eq:4.6}
{\mathbb E}[{\bf H}_{a}({\bf h}){\bf H}_{a'}({\bf h}')]
&=\sqrt[]{a!}\sum_{\substack{(k_{j}^{(i)}) \in {\mathbb Z}_{+}^{\ell} \otimes {\mathbb Z}_{+}^{\ell'} \\ k_{1}^{(i)}+k_{2}^{(i)}+\cdots+k_{\ell'}^{(i)}=a_{i} \forall i \in \{1,2,\ldots,\ell\}}}\left(\prod_{j=1}^{\ell'}\prod_{i=1}^{\ell}\frac{\langle h_{j}',h_{i} \rangle_{{\mathbb H}}^{k_{j}^{(i)}}}{k_{j}^{(i)}!}\right) \\ \notag
&\hspace{0.4cm}\times \prod_{j=1}^{\ell'}\int_{{\mathbb R}}\frac{{\rm d}^{k_{j}^{(1)}+k_{j}^{(2)}+\cdots+k_{j}^{(\ell)}}H_{a_{j}'}(x_{j})}{{\rm d}x_{j}^{k_{j}^{(1)}+k_{j}^{(2)}+\cdots+k_{j}^{(\ell)}}}g(x_{j}){\rm d}x_{j} \\ \notag
&=\sqrt[]{a!}\sum_{\substack{(k_{j}^{(i)}) \in {\mathbb Z}_{+}^{\ell} \otimes {\mathbb Z}_{+}^{\ell'} \\ k_{1}^{(i)}+k_{2}^{(i)}+\cdots+k_{\ell'}^{(i)}=a_{i} \forall i \in \{1,2,\ldots,\ell\}}}\left(\prod_{j=1}^{\ell'}\prod_{i=1}^{\ell}\frac{\langle h_{j}',h_{i} \rangle_{{\mathbb H}}^{k_{j}^{(i)}}}{k_{j}^{(i)}!}\right)\prod_{j=1}^{\ell'}\sqrt[]{a_{j}'!}\,\delta_{a_{j}',k_{j}^{(1)}+k_{j}^{(2)}+\cdots+k_{j}^{(\ell)}} \\ \notag
&=\sqrt[]{a!a'!}\sum_{\substack{(k_{j}^{(i)}) \in {\mathbb Z}_{+}^{\ell} \otimes {\mathbb Z}_{+}^{\ell'} \\ k_{1}^{(i)}+k_{2}^{(i)}+\cdots+k_{\ell'}^{(i)}=a_{i} \forall i \in \{1,2,\ldots,\ell\} \\ k_{j}^{(1)}+k_{j}^{(2)}+\cdots+k_{j}^{(\ell)}=a_{j}' \forall j \in \{1,2,\ldots,\ell'\}}}\left(\prod_{j=1}^{\ell'}\prod_{i=1}^{\ell}\frac{\langle h_{j}',h_{i} \rangle_{{\mathbb H}}^{k_{j}^{(i)}}}{k_{j}^{(i)}!}\right).
\end{align}
In particular, if $|a| \neq |a'| $, then ${\mathbb E}[{\bf H}_{a}({\bf h}){\bf H}_{a'}({\bf h}')]=0$ since the set of the above sum is empty.

Next, we consider it in terms of the Hilbert--Schmidt inner product under the assumption $|a|=|a'|=:m$.
To reparameterize the sum of the Hilbert--Schmidt inner product \eqref{eq:4.4}, we introduce the mapping
$$
{\mathfrak S}_{m} \ni \sigma \mapsto (\eta_{a}(\sigma(1)),\eta_{a}(\sigma(2)),\ldots,\eta_{a}(\sigma(m))) \in \{1,2,\ldots,\ell\}^{m}.
$$
The image of the mapping is
$$
\left\{{\bf i} \in \{1,2,\ldots,\ell\}^{m} \,;\, \sharp_{i}{\bf i}=a_{i} \text{ for all } i \in \{1,2,\ldots,\ell\}\right\},
$$
where 
$$
\sharp_{i}{\bf i}:=\sharp\{n \in \{1,2,\ldots,m\} \,;\, i_{n}=i\}
$$
for ${\bf i}=(i_{1},i_{2},\ldots,i_{m}) \in \{1,2,\ldots,\ell\}^{m}$ and $i \in \{1,2,\ldots,\ell\}$.

Consider the situation shown in the following figure, where there are $m$ balls named with numbers from $1$ to $\ell$, and for each $i \in \{1,2,\ldots,\ell\}$, the number of balls with the number $i$ is $a_{i}$.
\begin{align}
\label{eq:4.7}
\underbrace{\textcircled{\scriptsize $1$}\textcircled{\scriptsize $1$} \cdots \textcircled{\scriptsize $1$}}_{a_{1}} \underbrace{\textcircled{\scriptsize $2$}\textcircled{\scriptsize $2$} \cdots \textcircled{\scriptsize $2$}}_{a_{2}} \cdots \underbrace{\textcircled{\scriptsize $\ell$}\textcircled{\scriptsize $\ell$} \cdots \textcircled{\scriptsize $\ell$}}_{a_{\ell}}
\end{align}
Then, the image of the mapping implies the set of permutations of the balls.
Thus, since the number of elements in the image is the total number of the permutations, we obtain
\begin{align}
\label{eq:4.8}
\sharp\left\{{\bf i} \in \{1,2,\ldots,\ell\}^{m} \,;\, \sharp_{i}{\bf i}=a_{i} \text{ for all } i \in \{1,2,\ldots,\ell\}\right\}=\frac{m!}{a_{1}!a_{2}! \cdots a_{\ell}!}=\frac{m!}{a!}.
\end{align}
On the other hand, the mapping $\eta_{a}: \{1,2,\ldots,m\} \to \{1,2,\ldots,\ell\}$ gives a permutation of the balls lined up neatly in order from $1$ to $\ell$.
Indeed, by the definition of $\eta_{a}$, the permutation of the elements of $\eta_{a}$ coincides with the permutation \eqref{eq:4.7} as follows:
\begin{align}
\label{eq:4.9}
(\eta_{a}(1),\eta_{a}(2),\ldots,\eta_{a}(m))=(\underbrace{1,1,\cdots,1}_{a_{1}},\underbrace{2,2,\cdots,2}_{a_{2}},\cdots,\underbrace{\ell,\ell,\cdots,\ell}_{a_{\ell}}).
\end{align}
Note that we cannot distinguish between the balls with the same number in the permutation \eqref{eq:4.7}, but we can it in the permutation \eqref{eq:4.9}.
Fix a point ${\bf i}=(i_{1},i_{2},\ldots,i_{m})$ of the image (a permutation of the balls) and $\sigma \in {\mathfrak S}_{m}$ such that ${\bf i}=(\eta_{a}(\sigma(1)),\eta_{a}(\sigma(2)),\ldots,\eta_{a}(\sigma(m)))$.
Then, the mapping $\sigma$ can be understood as changing the order of the nicely aligned permutation \eqref{eq:4.9} to the permutation $(i_{1},i_{2},\cdots,i_{m})$.
In this case, since we can distinguish between the balls with the same number, the number of elements in the inverse image from ${\bf i}$ is
\begin{align}
\label{eq:4.95}
\sharp\left\{\sigma \in {\mathfrak S}_{m} \,;\, {\bf i}=(\eta_{a}(\sigma(1)),\eta_{a}(\sigma(2)),\ldots,\eta_{a}(\sigma(m)))\right\}=\prod_{i=1}^{\ell}a_{i}!=a!
\end{align}

The sum of the Hilbert--Schmidt inner product \eqref{eq:4.4} is reparameterized as follows: By \eqref{eq:4.95}, we obtain
\begin{align}
\label{eq:4.10}
\langle {\bf h}_{a},{\bf h}'_{a'} \rangle_{{\rm HS}}
&=\frac{1}{\sqrt[]{a!a'!}\,m!}\sum_{\sigma \in {\mathfrak S}_{m}}\sum_{\sigma' \in {\mathfrak S}_{m}}\prod_{n=1}^{m}\langle h_{\eta_{a}(\sigma(n))},h_{\eta_{a'}(\sigma'(n))}' \rangle_{{\mathbb H}} \\ \notag
&=\frac{\sqrt[]{a!a'!}}{m!}\sum_{\substack{{\bf  i} \in \{1,2,\ldots,\ell\}^{m} \\ \sharp_{i}{\bf i}=a_{i} \forall i \in \{1,2,\ldots,\ell\}}}\sum_{\substack{{\bf j} \in \{1,2,\ldots,\ell'\}^{m} \\ \sharp_{j}{\bf j}=a_{j}' \forall j \in \{1,2,\ldots,\ell'\}}}\prod_{n=1}^{m}\langle h_{i_{n}},h_{j_{n}}' \rangle_{{\mathbb H}}.
\end{align}
In the same way as \eqref{eq:4.8}, for $(k_{j}^{(i)}) \in {\mathbb Z}_{+}^{\ell} \otimes {\mathbb Z}_{+}^{\ell'}$ such that $\sum_{i=1}^{\ell}\sum_{j=1}^{\ell'} k_{j}^{(i)} = m$, we have
$$
\sharp\left\{
\begin{array}{l}({\bf i},{\bf j}) \in \{1,2,\ldots,\ell\}^{m} \times \{1,2,\ldots,\ell'\}^{m} \,;\, \\
\hspace{1.0cm}\sharp_{(i,j)} ({\bf i},{\bf j})=k_{j}^{(i)} \text{ for all } (i,j) \in \{1,2,\ldots,\ell\} \times \{1,2,\ldots,\ell'\}
\end{array}
\right\}
=\frac{m!}{\prod_{i=1}^{\ell}\prod_{j=1}^{\ell'}k_{j}^{(i)}!},
$$
where
$$
\sharp_{(i,j)}({\bf i},{\bf j}):=\sharp\left\{n \in \{1,2,\ldots,m\} \,;\, (i_{n},j_{n})=(i,j)\right\}
$$
for $({\bf i},{\bf j})=((i_{1},i_{2},\ldots,i_{m}),(j_{1},j_{2},\ldots,j_{m})) \in \{1,2,\ldots,\ell\}^{m} \times \{1,2,\ldots,\ell'\}^{m}$ and $(i,j) \in \{1,2,\ldots,\ell\} \times \{1,2,\ldots,\ell'\}$.
Thus, we obtain
\begin{align}
\label{eq:4.11}
&\sum_{\substack{{\bf i} \in \{1,2,\ldots,\ell\}^{m} \\ \sharp_{i}{\bf i}=a_{i} \forall i \in \{1,2,\ldots,\ell\}}}\sum_{\substack{{\bf j} \in \{1,2,\ldots,\ell'\}^{m} \\ \sharp_{j}{\bf j}=a_{j}' \forall j \in \{1,2,\ldots,\ell'\}}}\prod_{n=1}^{m}\langle h_{i_{n}},h_{j_{n}}' \rangle_{{\mathbb H}} \\ \notag
&=\sum_{\substack{{\bf i} \in \{1,2,\ldots,\ell\}^{m} \\ \sharp_{i}{\bf i}=a_{i} \forall i \in \{1,2,\ldots,\ell\}}}\sum_{\substack{{\bf j} \in \{1,2,\ldots,\ell'\}^{m} \\ \sharp_{j}{\bf j}=a_{j}' \forall j \in \{1,2,\ldots,\ell'\}}}\prod_{i=1}^{\ell} \prod_{j=1}^{\ell'} \langle h_{i},h_{j}' \rangle_{{\mathbb H}}^{\sharp_{(i,j)}({\bf i},{\bf j})} \\ \notag
&=\sum_{\substack{(k_{j}^{(i)}) \in {\mathbb Z}_{+}^{\ell} \otimes {\mathbb Z}_{+}^{\ell'} \\ k_{1}^{(i)}+k_{2}^{(i)}+\cdots+k_{\ell'}^{(i)}=a_{i} \forall i \in \{1,2,\ldots,\ell\} \\ k_{j}^{(1)}+k_{j}^{(2)}+\cdots+k_{j}^{(\ell)}=a_{j}' \forall j \in \{1,2,\ldots,\ell'\}}}
\sum_{\substack{({\bf i},{\bf j}) \in \{1,2,\ldots,\ell\}^{m} \times \{1,2,\ldots,\ell'\}^{m} \\ \sharp_{(i,j)}({\bf i},{\bf j})=k_{j}^{(i)} \forall (i,j) \in \{1,2,\ldots,\ell\} \times \{1,2,\ldots,\ell'\}}}\prod_{i=1}^{\ell}\prod_{j=1}^{\ell'} \langle h_{i},h_{j}' \rangle_{{\mathbb H}}^{k_{j}^{(i)}} \\ \notag
&=m!\sum_{\substack{(k_{j}^{(i)}) \in {\mathbb Z}_{+}^{\ell} \otimes {\mathbb Z}_{+}^{\ell'} \\ k_{1}^{(i)}+k_{2}^{(i)}+\cdots+k_{\ell'}^{(i)}=a_{i} \forall i \in \{1,2,\ldots,\ell\} \\ k_{j}^{(1)}+k_{j}^{(2)}+\cdots+k_{j}^{(\ell)}=a_{j}' \forall j \in \{1,2,\ldots,\ell'\}}}\prod_{i=1}^{\ell}\prod_{j=1}^{\ell'} \frac{\langle h_{i},h_{j}' \rangle_{{\mathbb H}}^{k_{j}^{(i)}}}{k_{j}^{(i)}!}.
\end{align}
By combining \eqref{eq:4.6}, \eqref{eq:4.10} and \eqref{eq:4.11}, we conclude the proof.
\end{proof}

To specifically calculate the value of the Hilbert--Schmidt inner product on the right-hand side of Lemma \ref{lem:4.3}, we need the following condition.

\begin{Definition}[match]
\label{def:4.4}
Let $N_{0},N_{1} \in {\mathbb N}$, and $a \in {\mathbb Z}_{+}^{N_{0}}$ and $a' \in {\mathbb Z}_{+}^{N_{0}N_{1}}$ be multi-indexes.
If for any $i \in \{1,2,\ldots,N_{0}\}$,
$$
\sum_{k=1}^{i}a_{k}=\sum_{k=1}^{iN_{1}}a'_{k},
$$
then we call $a'$ matches $a$.
\end{Definition}

As we can easily see that 
\begin{itemize}
\item if $a'$ matches $a$, then $|a|=|a'|$, and
\item $a'$ matches $a$ if and only if for any $i \in \{1,2,\ldots,N_{0}\}$,
$$
a_{i}=\sum_{k=(i-1)N_{1}+1}^{iN_{1}}a_{k}'.
$$
\end{itemize}

\begin{Lemma}
\label{lem:4.5}
Let $N_{0},N_{1} \in {\mathbb N}$, and $a \in {\mathbb Z}_{+}^{N_{0}}$ and $a' \in {\mathbb Z}_{+}^{N_{0}N_{1}}$ be multi-indexes such that $|a|=|a'|=:m$.
Then, it holds that
$$
\langle {\bf h}_{a}^{(N_{0})},{\bf h}_{a'}^{(N_{0}N_{1})} \rangle_{{\rm HS}}
=
\left\{
\begin{array}{ll}
\displaystyle{\sqrt[]{\frac{a!}{a'!}}\frac{1}{\sqrt[]{N_{1}}^{m}}} & \text{if } a'  \text{ matches } a \\
0 & \text{otherwise}.
\end{array}
\right.
$$
\end{Lemma}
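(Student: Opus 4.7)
The plan is to apply the closed form \eqref{eq:4.4} for the Hilbert--Schmidt inner product and reduce everything to counting those permutations $\sigma \in {\mathfrak S}_{m}$ whose contribution is nonzero. Since the orthonormal systems ${\bf h}^{(N_{0})}$ and ${\bf h}^{(N_{0}N_{1})}$ are rescaled indicators of dyadic-like subintervals, the pairwise inner products are easy to compute: the support of $h_{j}^{(N_{0}N_{1})}$ is contained in the support of $h_{i}^{(N_{0})}$ exactly when $(i-1)N_{1} < j \leq iN_{1}$, in which case a short calculation gives $\langle h_{i}^{(N_{0})},h_{j}^{(N_{0}N_{1})} \rangle_{{\mathbb H}}=1/\sqrt{N_{1}}$, and otherwise the inner product vanishes. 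So each factor in the product $\prod_{n=1}^{m}\langle h_{\eta_{a}(n)}^{(N_{0})},h_{\eta_{a'}(\sigma(n))}^{(N_{0}N_{1})} \rangle_{{\mathbb H}}$ is either $1/\sqrt{N_{1}}$ or $0$, with the former occurring iff $\eta_{a}(n) = \lceil \eta_{a'}(\sigma(n))/N_{1} \rceil$.

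Next I would note that, for a fixed $i \in \{1,2,\ldots,N_{0}\}$, the number of indices $n$ with $\eta_{a}(n)=i$ equals $a_{i}$, whereas the number of indices $k$ with $\eta_{a'}(k) \in \{(i-1)N_{1}+1,\ldots,iN_{1}\}$ equals $\sum_{k=(i-1)N_{1}+1}^{iN_{1}}a'_{k}$. For a permutation $\sigma \in {\mathfrak S}_{m}$ to make every factor nonzero it must send, for each $i$, the $a_{i}$ positions in the $i$-th block of $\eta_{a}$ into the positions in the $i$-th block of $\eta_{a'}$. Consequently, if the matching condition $a_{i}=\sum_{k=(i-1)N_{1}+1}^{iN_{1}}a'_{k}$ fails for some $i$ (i.e.\ $a'$ does not match $a$) no valid $\sigma$ exists and the HS inner product equals $0$.

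If on the other hand $a'$ matches $a$, then choosing such a $\sigma$ is equivalent to choosing, independently for each $i$, a bijection between the $a_{i}$ positions in the $i$-th block of $\eta_{a}$ and the $a_{i}$ positions within the $i$-th block of $\eta_{a'}$; this produces exactly $\prod_{i=1}^{N_{0}}a_{i}! = a!$ valid permutations, each contributing the product $(1/\sqrt{N_{1}})^{m}$. Substituting into \eqref{eq:4.4} gives
$$
\langle {\bf h}_{a}^{(N_{0})},{\bf h}_{a'}^{(N_{0}N_{1})} \rangle_{{\rm HS}}
=\frac{1}{\sqrt{a!a'!}} \cdot a! \cdot \frac{1}{\sqrt{N_{1}}^{m}}
=\sqrt{\frac{a!}{a'!}}\,\frac{1}{\sqrt{N_{1}}^{m}},
$$
which is the claim. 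The main obstacle is purely bookkeeping: one has to keep the block decompositions of $\eta_{a}$ and $\eta_{a'}$ straight, and verify carefully that the count of block-respecting permutations is $a!$ (rather than $a'!$ or some mixture), which is where the matching hypothesis feeds in decisively.
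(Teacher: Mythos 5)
Your proposal is correct and follows essentially the same route as the paper: compute the pairwise inner products $\langle h_{i}^{(N_{0})},h_{j}^{(N_{0}N_{1})}\rangle_{\mathbb H}$, observe that a permutation contributes nonzero to \eqref{eq:4.4} exactly when it respects the block structure (the paper's condition \eqref{eq:4.13}), and count $a!$ such permutations when $a'$ matches $a$ and none otherwise. The only cosmetic difference is that the paper phrases the counting argument in terms of permutations of labelled balls, while you phrase it as choosing independent bijections block by block; the content is identical.
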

\begin{proof}
We first obtain for any $i,j \in {\mathbb N}$,
\begin{align}
\label{eq:4.12}
\langle h_{i}^{(N_{0})},h_{j}^{(N_{0}N_{1})} \rangle_{{\mathbb H}}
&=\frac{\langle {\bf 1}_{(\frac{i-1}{N_{0}}T,\frac{i}{N_{0}}T]},{\bf 1}_{(\frac{j-1}{N_{0}N_{1}}T,\frac{j}{N_{0}N_{1}}T]} \rangle_{{\mathbb H}}}{T/(N_{0}\sqrt[]{N_{1}})}
=
\left\{
\begin{array}{ll}
\displaystyle
\frac{1}{\sqrt[]{N_{1}}} & \text{if } (i-1)N_{1}<j \leq iN_{1} \\
0 & \text{otherwise}.
\end{array}
\right.
\end{align}

We take $\sigma \in {\mathfrak S}_{m}$ which satisfies the condition
\begin{align}
\label{eq:4.13}
(\eta_{a}(n)-1)N_{1}<\eta_{a'}(\sigma(n)) \leq \eta_{a}(n)N_{1}, \quad n \in \{1,2,\ldots,m\}.
\end{align}
By the definition of $\eta_{a}$, the condition \eqref{eq:4.13} if and only if for any $i \in \{1,2,\ldots,N_{0}\}$,
$$
(i-1)N_{1}<\eta_{a'}(\sigma(n)) \leq iN_{1}, \quad n \in \left\{\sum_{k=1}^{i-1}a_{k}+1,\sum_{k=1}^{i-1}a_{k}+2,\ldots,\sum_{k=1}^{i}a_{k}\right\}.
$$
Thus for each $i \in \{1,2,\ldots,N_{0}\}$, the mapping $\sigma$ changes the order of the nicely aligned permutation as follows:
\begin{align*}
&(\eta_{a'}(\sum_{k=1}^{(i-1)N_{1}}a_{k}'+1),\eta_{a'}(\sum_{k=1}^{(i-1)N_{1}}a_{k}'+2),\ldots,\eta_{a'}(\sum_{k=1}^{iN_{1}}a_{k}')) \\
&=(\underbrace{(i-1)N_{1}+1,\cdots,(i-1)N_{1}+1}_{a_{(i-1)N_{1}+1}'},\underbrace{(i-1)N_{1}+2,\cdots,(i-1)N_{1}+2}_{a_{(i-1)N_{1}+2}'},\cdots,\underbrace{iN_{1},\cdots,iN_{1}}_{a_{iN_{1}}'})
\end{align*}
$$
\overset{\sigma}{\Longrightarrow} \quad (\underbrace{\eta_{a'}(\sigma(\sum_{k=1}^{i-1}a_{k}+1)),\eta_{a'}(\sigma(\sum_{k=1}^{i-1}a_{k}+2)),\cdots,\eta_{a'}(\sigma(\sum_{k=1}^{i}a_{k}))}_{a_{i}}).
$$
This implies that
$$
\sharp\left\{\sigma \in {\mathfrak S}_{m} \,;\, \sigma \text{ satisfies the condition \eqref{eq:4.13}}\right\}=
\left\{
\begin{array}{ll}
\displaystyle
\prod_{i=1}^{N_{0}}a_{i}!=a! & \text{if } a'  \text{ matches } a \\
0 & \text{otherwise}.
\end{array}
\right.
$$
Therefore, by \eqref{eq:4.4} and \eqref{eq:4.12}, we have
\begin{align*}
\langle {\bf h}_{a}^{(N_{0})},{\bf h}_{a'}^{(N_{0}N_{1})} \rangle_{{\rm HS}}
&=\frac{1}{\sqrt[]{a!a'!}}\sum_{\sigma \in {\mathfrak S}_{m}}\prod_{n=1}^{m}\langle h_{\eta_{a}(n)}^{(N_{0})},h_{\eta_{a'}(\sigma(n))}^{(N_{0}N_{1})} \rangle_{{\mathbb H}} \\
&=
\left\{
\begin{array}{ll}
\displaystyle{\sqrt[]{\frac{a!}{a'!}}\frac{1}{\sqrt[]{N_{1}}^{m}}} & \text{if } a' \text{ matches } a \\
0 & \text{otherwise}.
\end{array}
\right.
\end{align*}
\end{proof}

The following lemma gives estimates of some partial sums.

\begin{Lemma}
\label{lem:4.6}
Let $N_{0},N_{1} \in {\mathbb N}$, $n \in {\mathbb N}$, $r \in [0,1]$ and $a \in {\mathbb Z}_{+}^{N_{0}}$ be a multi-index.
Then, the following three statements hold:
\begin{enumerate}
\item
$$
\sum_{\substack{a' \in {\mathbb Z}_{+}^{N_{0}N_{1}} \\ a' \text{ matches } a}}\frac{a!}{a'!}\frac{1}{N_{1}^{|a|}}=1,
$$
\item
$$
\sum_{\substack{a' \in {\mathbb Z}_{+}^{N_{0}N_{1}} \\ a' \text{ matches } a \\ \exists \ell' \in {\mathbb N} \text{ s.t. } a'_{\ell'}>n \text{ and } a'_{i}=0 \forall i>\ell'}}\frac{a!}{a'!}\frac{1}{N_{1}^{|a|}} \leq \frac{|a|^{n}}{n!N_{1}^{n}},
$$
\item
$$
\sum_{\substack{a' \in {\mathbb Z}_{+}^{N_{0}N_{1}} \\ a' \text{ matches } a \\ \exists \ell' \in {\mathbb N} \text{ s.t. } a'_{\ell'}>n \text{ and } a'_{i}=0 \forall i>\ell'}}\frac{a!}{a'!}\frac{1}{N_{1}^{|a|}} \leq \left(\frac{|a|^{n}}{n!N_{1}^{n}}\right)^{r}.
$$
\end{enumerate}
\end{Lemma}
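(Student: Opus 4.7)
The plan splits into three coordinated parts. For (1), I factorise the summand across the $N_0$ blocks of size $N_1$: setting $b^{(i)} := (a'_{(i-1)N_1+1},\ldots,a'_{iN_1})$, the matching constraint becomes $|b^{(i)}|=a_i$ for each $i$, and the weight splits as $\frac{a!}{a'!}N_1^{-|a|} = \prod_{i=1}^{N_0}\frac{a_i!}{b^{(i)}!}N_1^{-a_i}$. The multinomial theorem then gives $\sum_{|b^{(i)}|=a_i}\frac{a_i!}{b^{(i)}!}N_1^{-a_i}=1$ for each $i$, so the full sum is $1$.

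For (2), the additional constraint $a'_{\ell'}>n$ with $a'_i=0$ for $i>\ell'$ forces $\ell'$ to lie in the block $i^\ast := \max\{i : a_i > 0\}$ (the case $a\equiv 0$ being trivial). Blocks $i<i^\ast$ contribute $1$ each by (1) and blocks $i>i^\ast$ are forced to zero, so the sum collapses to a single-block quantity $p_n(A,N_1)$ with $A:=a_{i^\ast}\leq|a|$: the probability that when $A$ balls are dropped independently and uniformly into $N_1$ positions, the highest occupied position contains more than $n$ balls. It suffices to prove $p_n(A,N_1)\leq A^n/(n!\,N_1^n)$.

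Writing $X_M$ for the count at the highest occupied position, the pointwise inequality $\mathbf{1}\{X_M\geq n+1\}\leq\binom{X_M}{n+1}$ and ball exchangeability give, after conditioning on the common position of a fixed $(n+1)$-subset,
\[
p_n(A,N_1)\leq \mathbb{E}\Bigl[\binom{X_M}{n+1}\Bigr]= \binom{A}{n+1}\frac{1}{N_1^{n+1}}\sum_{v=1}^{N_1}\Bigl(\frac{v}{N_1}\Bigr)^{A-n-1}.
\]
I bound the power sum by the elementary estimate $\sum_{v=1}^{N_1}v^p\leq N_1^p+N_1^{p+1}/(p+1)$ (isolate $v=N_1$, then dominate the rest by $\int_1^{N_1}x^p\,dx$), and use the identity $\binom{A}{n+1}/(A-n)=\binom{A}{n}/(n+1)$ to rewrite the result as
\[
p_n(A,N_1)\leq\frac{\binom{A}{n+1}}{N_1^{n+1}}+\frac{\binom{A}{n}}{(n+1)N_1^n}\leq\frac{A^n}{(n+1)!\,N_1^n}\biggl(1+\frac{A}{N_1}\biggr).
\]
When $A\leq nN_1$, the factor $1+A/N_1$ is at most $n+1$ and the bound $A^n/(n!N_1^n)$ drops out; when $A>nN_1$, the target already exceeds $n^n/n!\geq 1$ and the trivial estimate $p_n\leq 1$ closes the case (and $A\leq n$ is vacuous since $X_M\leq A$).

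Statement (3) is a one-line interpolation: the sum is at most $1$ by (1) and at most $B:=|a|^n/(n!N_1^n)$ by (2), and $\min(1,B)\leq B^r$ for $r\in[0,1]$ (split on $B\leq 1$ versus $B\geq 1$) yields the claimed bound $B^r$. The main obstacle is the sharpness required in (2): the raw first-moment bound $\binom{A}{n+1}/N_1^n$ overshoots by a factor $A/(n+1)$, and the elementary power-sum estimate is precisely what supplies the compensating second term $\binom{A}{n}/((n+1)N_1^n)$ that recovers the required $A^n/n!$.
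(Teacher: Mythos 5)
Your proof is correct, and for the heart of the lemma it takes a genuinely different route from the paper. Parts (1) and (3) coincide with the paper's argument: the block decomposition $b^{(i)}=(a'_{(i-1)N_1+1},\ldots,a'_{iN_1})$ together with the multinomial theorem for (1), and the interpolation between (1) and (2) (the paper writes $S\le S^{r}S^{1-r}$, you write $\min(1,B)\le B^{r}$ --- the same observation) for (3). For (2), both arguments reduce, via the same block factorization, to bounding the single-block quantity $p_n(A,N_1)$ with $A=a_{i^\ast}$, i.e.\ the probability that the highest occupied cell among $N_1$ cells receiving $A$ independent uniform balls holds more than $n$ balls; after that point the methods diverge. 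The paper enumerates this tail exactly by summing over the position $\ell''$ and the value $k>n$ of the last nonzero coordinate, identifies $\sum_{k=n+1}^{A}\binom{A}{k}(\ell''-1)^{A-k}$ with a Taylor integral remainder of $((\ell''-1)+1)^{A}$, bounds the remainder, and telescopes over $\ell''$ to obtain the clean intermediate bound $\binom{A}{n}N_1^{-n}\le A^{n}/(n!\,N_1^{n})$ with no case distinctions. You instead use the factorial-moment device $\mathbf{1}\{X_M\ge n+1\}\le\binom{X_M}{n+1}$, compute $\mathbb{E}[\binom{X_M}{n+1}]$ exactly by exchangeability, and control the resulting power sum by integral comparison; this is shorter and more probabilistic, but it is lossier by roughly a factor $A/((n+1)N_1)$, which is precisely why you need the closing case split $A\le nN_1$ versus $A>nN_1$ (where the target bound already exceeds $1$) to recover the constant $1/n!$. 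Both routes are valid: the paper's buys an exact identity and a sharper intermediate estimate, yours buys a more transparent moment argument at the cost of the final case analysis.
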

\begin{proof}
For $a' \in {\mathbb Z}_{+}^{N_{0}N_{1}}$ such that $a'$ matches $a$, we set $b^{(i)}:=(a_{(i-1)N_{1}+1}',a_{(i-1)N_{1}+2}',\ldots,a_{iN_{1}}')$, $i \in \{1,2,\ldots,N_{0}\}$.
Then for any $i \in \{1,2,\ldots,N_{0}\}$, $b^{(i)} \in {\mathbb Z}_{+}^{N_{1}}$ and $|b^{(i)}|=a_{i}$, and by using the multinomial theorem, we obtain
\begin{align*}
\sum_{\substack{a' \in {\mathbb Z}_{+}^{N_{0}N_{1}} \\ a' \text{ matches } a}}\frac{a!}{a'!}\frac{1}{N_{1}^{|a|}}
&=\sum_{\substack{b^{(1)} \in {\mathbb Z}_{+}^{N_{1}} \\ |b^{(1)}|=a_{1}}}\sum_{\substack{b^{(2)} \in {\mathbb Z}_{+}^{N_{1}} \\ |b^{(2)}|=a_{2}}} \cdots \sum_{\substack{b^{(N_{0})} \in {\mathbb Z}_{+}^{N_{1}} \\ |b^{(N_{0})}|=a_{N_{0}}}}\frac{a_{1}!a_{2}! \cdots a_{N_{0}}!}{b^{(1)}!b^{(2)}! \cdots b^{(N_{0})}!}\frac{1}{N_{1}^{a_{1}+a_{2}+\cdots+a_{N_{0}}}} \\
&=\left(\sum_{\substack{b^{(1)} \in {\mathbb Z}_{+}^{N_{1}} \\ |b^{(1)}|=a_{1}}}\frac{a_{1}!}{b^{(1)}!}\frac{1}{N_{1}^{a_{1}}}\right)\left(\sum_{\substack{b^{(2)} \in {\mathbb Z}_{+}^{N_{1}} \\ |b^{(2)}|=a_{2}}}\frac{a_{2}!}{b^{(2)}!}\frac{1}{N_{1}^{a_{2}}}\right) \cdots \left(\sum_{\substack{b^{(N_{0})} \in {\mathbb Z}_{+}^{N_{1}} \\ |b^{(N_{0})}|=a_{N_{0}}}}\frac{a_{N_{0}}!}{b^{(N_{0})}!}\frac{1}{N_{1}^{a_{N_{0}}}}\right)
=1.
\end{align*}
Thus, the statement (1) is proved.

For $a' \in {\mathbb Z}_{+}^{N_{0}N_{1}}$ such that $a'$ matches $a$ and there exists $\ell' \in {\mathbb N}$ such that $a_{\ell'}'>n$ and $a_{i}'=0$ for all $i>\ell'$, since $a'$ matches $a$, there exists $\ell \equiv \ell(\ell') \in {\mathbb N}$ such that $a_{\ell}>n$ and $a_{i}=0$ for all $i>\ell$.
Here, if $\ell=N_{0}$, the condition, $a_{i}=0$ for all $i>\ell$, is not taken into account.
Then in the same way as in the proof of (1), by using the multinomial theorem, we have
\begin{align*}
&\sum_{\substack{a' \in {\mathbb Z}_{+}^{N_{0}N_{1}} \\ a' \text{ matches } a \\ \exists \ell' \in {\mathbb N} \text{ s.t. } a'_{\ell'}>n \text{ and } a'_{i}=0 \forall i>\ell'
}} \frac{a!}{a'!}\frac{1}{N_{1}^{|a|}} \\
&=\sum_{\substack{b^{(1)} \in {\mathbb Z}_{+}^{N_{1}} \\ |b^{(1)}|=a_{1}}}\sum_{\substack{b^{(2)} \in {\mathbb Z}_{+}^{N_{1}} \\ |b^{(2)}|=a_{2}}} \cdots \sum_{\substack{b^{(\ell-1)} \in {\mathbb Z}_{+}^{N_{1}} \\ |b^{(\ell-1)}|=a_{\ell-1}}}\sum_{\ell''=1}^{N_{1}}\sum_{\substack{b^{(\ell)} \in {\mathbb Z}_{+}^{N_{1}} \\ |b^{(\ell)}|=a_{\ell} \\ b^{(\ell)}_{\ell''}>n \text{ and } b^{(\ell)}_{i}=0 \forall i>\ell''}}\frac{a_{1}!a_{2}! \cdots a_{\ell}!}{b^{(1)}!b^{(2)}! \cdots b^{(\ell)}!}\frac{1}{N_{1}^{a_{1}+a_{2}+\cdots+a_{\ell}}} \\
&=\left(\sum_{\substack{b^{(1)} \in {\mathbb Z}_{+}^{N_{1}} \\ |b^{(1)}|=a_{1}}}\frac{a_{1}!}{b^{(1)}!}\frac{1}{N_{1}^{a_{1}}}\right) \cdots \left(\sum_{\substack{b^{(\ell-1)} \in {\mathbb Z}_{+}^{N_{1}} \\ |b^{(\ell-1)}|=a_{\ell-1}}}\frac{a_{\ell-1}!}{b^{(\ell-1)}!}\frac{1}{N_{1}^{a_{\ell-1}}}\right)\sum_{\ell''=1}^{N_{1}}\sum_{\substack{b^{(\ell)} \in {\mathbb Z}_{+}^{N_{1}} \\ |b^{(\ell)}|=a_{\ell} \\ b^{(\ell)}_{\ell''}>n \text{ and } b^{(\ell)}_{i}=0 \forall i>\ell''}}\frac{a_{\ell}!}{b^{(\ell)}!}\frac{1}{N_{1}^{a_{\ell}}} \\
&=\sum_{\ell''=1}^{N_{1}}\sum_{\substack{b^{(\ell)} \in {\mathbb Z}_{+}^{N_{1}} \\ |b^{(\ell)}|=a_{\ell} \\ b^{(\ell)}_{\ell''}>n \text{ and } b^{(\ell)}_{i}=0 \forall i>\ell''}}\frac{a_{\ell}!}{b^{(\ell)}!}\frac{1}{N_{1}^{a_{\ell}}}
=\sum_{\ell''=1}^{N_{1}}\sum_{k=n+1}^{a_{\ell}}\sum_{\substack{b^{(\ell)} \in {\mathbb Z}_{+}^{N_{1}} \\ |b^{(\ell)}|=a_{\ell} \\ b^{(\ell)}_{\ell''}=k \text{ and } b^{(\ell)}_{i}=0 \forall i>\ell''}}\frac{a_{\ell}!}{b^{(\ell)}!}\frac{1}{N_{1}^{a_{\ell}}} \\
&=\sum_{\ell''=1}^{N_{1}}\sum_{k=n+1}^{a_{\ell}}\sum_{\substack{c \in {\mathbb Z}_{+}^{\ell''-1} \\ |c|=a_{\ell}-k}}\frac{a_{\ell}!}{c!k!}\frac{1}{N_{1}^{a_{\ell}}}
=\sum_{\ell''=1}^{N_{1}}\sum_{k=n+1}^{a_{\ell}}\sum_{\substack{c \in {\mathbb Z}_{+}^{\ell''-1} \\ |c|=a_{\ell}-k}}\frac{a_{\ell}!}{k!(a_{\ell}-k)!}\frac{(a_{\ell}-k)!}{c!}\frac{1}{N_{1}^{a_{\ell}}} \\
&=\sum_{\ell''=1}^{N_{1}}\sum_{k=n+1}^{a_{\ell}}\frac{a_{\ell}!}{k!(a_{\ell}-k)!}(\ell''-1)^{a_{\ell}-k}\frac{1}{N_{1}^{a_{\ell}}}.
\end{align*}
Here, for any $\ell'' \in \{1,2,\ldots,N_{1}\}$, since by Taylor's theorem,
$$
(\ell'')^{a_{\ell}}=\sum_{k=0}^{n}\frac{a_{\ell}!}{k!(a_{\ell}-k)!}(\ell''-1)^{a_{\ell}-k}+\frac{a_{\ell}!}{n!(a_{\ell}-(n+1))!}\int_{\ell''-1}^{\ell''}(\ell''-y)^{n}y^{a_{\ell}-(n+1)}{\rm d}y,
$$
we obtain
\begin{align*}
\sum_{k=n+1}^{a_{\ell}}\frac{a_{\ell}!}{k!(a_{\ell}-k)!}(\ell''-1)^{a_{\ell}-k}
&=\frac{a_{\ell}!}{n!(a_{\ell}-(n+1))!}\int_{\ell''-1}^{\ell''}(\ell''-y)^{n}y^{a_{\ell}-(n+1)}{\rm d}y \\
&\leq \frac{a_{\ell}!}{n!(a_{\ell}-(n+1))!}\int_{\ell''-1}^{\ell''}y^{a_{\ell}-(n+1)}{\rm d}y \\
&=\frac{a_{\ell}!}{n!(a_{\ell}-n)!}\left((\ell'')^{a_{\ell}-n}-(\ell''-1)^{a_{\ell}-n}\right).
\end{align*}
Hence, since $\sum_{\ell''=1}^{N_{1}}((\ell'')^{a_{\ell}-n}-(\ell''-1)^{a_{\ell}-n})=N_{1}^{a_{\ell}-n}$, we have
\begin{align*}
\sum_{\substack{a' \in {\mathbb Z}_{+}^{N_{0}N_{1}} \\ a' \text{ matches } a \\ \exists \ell' \in {\mathbb N} \text{ s.t. } a'_{\ell'}>n \text{ and } a'_{i}=0 \forall i>\ell'
}} \frac{a!}{a'!}\frac{1}{N_{1}^{|a|}}
&\leq \sum_{\ell''=1}^{N_{1}}\frac{a_{\ell}!}{n!(a_{\ell}-n)!}\left((\ell'')^{a_{\ell}-n}-(\ell''-1)^{a_{\ell}-n}\right)\frac{1}{N_{1}^{a_{\ell}}} \\
&\leq \frac{a_{\ell}^{n}}{n!N_{1}^{n}}.
\end{align*}
Thus, the statement (2) is proved.

From the statements (1) and (2), we obtain
\begin{align*}
\sum_{\substack{a' \in {\mathbb Z}_{+}^{N_{0}N_{1}} \\ a' \text{ matches } a \\ \exists \ell' \in {\mathbb N} \text{ s.t. } a'_{\ell'}>n \text{ and } a'_{i}=0 \forall i>\ell'}}\frac{a!}{a'!}\frac{1}{N_{1}^{|a|}}
&\leq \left(\sum_{\substack{a' \in {\mathbb Z}_{+}^{N_{0}N_{1}} \\ a' \text{ matches } a \\ \exists \ell' \in {\mathbb N} \text{ s.t. } a'_{\ell'}>n \text{ and } a'_{i}=0 \forall i>\ell'
}}\frac{a!}{a'!}\frac{1}{N_{1}^{|a|}}\right)^{r}\left(\sum_{\substack{a' \in {\mathbb Z}_{+}^{N_{0}N_{1}} \\ a' \text{ matches } a}}\frac{a!}{a'!}\frac{1}{N_{1}^{|a|}}\right)^{1-r} \\
&\leq \left(\frac{|a|^{n}}{n!N_{1}^{n}}\right)^{r}. 
\end{align*}
Therefore, all the statements are proved.
\end{proof}

Using the above lemmas, we prove Theorem \ref{theo:3.2}.

\subsection{Proof of Theorem \ref{theo:3.2}}
\label{sec:4.1}
\begin{proof}
Since ${\mathbb D}_{2,s}^{(N_{0})}$ is a subspace of ${\mathbb D}_{2,s}^{(N_{0}N_{1})}$, by the Fourier expansion \eqref{eq:3.1} of $F$, we obtain
\begin{align*}
\left\|{\rm Err}_{n}^{(N_{0}N_{1})}(F)\right\|_{2,s}^{2}
&=\left\|\sum_{a \in {\mathbb Z}_{+}^{N_{0}}}(1+|a|)^{s/2}{\mathbb E}\left[F{\bf H}_{a}({\bf h}^{(N_{0})})\right]{\rm Err}_{n}^{(N_{0}N_{1})}((1+|a|)^{-s/2}{\bf H}_{a}({\bf h}^{(N_{0})}))\right\|_{2,s}^{2} \\
&=\sum_{a \in {\mathbb Z}_{+}^{N_{0}}}(1+|a|)^{s}{\mathbb E}\left[F{\bf H}_{a}({\bf h}^{(N_{0})})\right]^{2}\left\|{\rm Err}_{n}^{(N_{0}N_{1})}((1+|a|)^{-s/2}{\bf H}_{a}({\bf h}^{(N_{0})}))\right\|_{2,s}^{2}
\end{align*}
Here, since the mapping ${\rm Err}_{n}^{(N_{0}N_{1})}$ is the projection from ${\mathbb D}^{(N_{0}N_{1})}_{2,s}$ onto the completion of the linear span of
$$
\left\{(1+|a'|)^{-s/2}{\bf H}_{a'}({\bf h}^{(N_{0}N_{1})}) \,;\, 
\begin{array}{l}
a'  \in {\mathbb Z}_{+}^{N_{0}N_{1}} \text{ such that there exists } \ell' \in {\mathbb N} \\
\text{such that } a_{\ell'}>n \text{ and } a_{i}'=0 \text{ for all } i>\ell'
\end{array}
\right\},
$$
by Lemma \ref{lem:4.3}, Lemma \ref{lem:4.5} and Lemma \ref{lem:4.6} (3), we have
\begin{align*}
\left\|{\rm Err}_{n}^{(N_{0}N_{1})}((1+|a|)^{-s/2}{\bf H}_{a}({\bf h}^{(N_{0})}))\right\|_{2,s}^{2}
&=\sum_{\substack{a' \in {\mathbb Z}_{+}^{N_{0}N_{1}} \\ \exists \ell' \in {\mathbb N} \text{ s.t. } a'_{\ell'}>n \text{ and } a'_{i}=0 \forall i>\ell'}}{\mathbb E}\left[{\bf H}_{a}({\bf h}^{(N_{0})}){\bf H}_{a'}({\bf h}^{(N_{0}N_{1})})\right] \\
&=\sum_{\substack{a' \in {\mathbb Z}_{+}^{N_{0}N_{1}} \\ \exists \ell' \in {\mathbb N} \text{ s.t. } a'_{\ell'}>n \text{ and } a'_{i}=0 \forall i>\ell'}}\langle {\bf h}_{a}^{(N_{0})},{\bf h}_{a'}^{(N_{0}N_1)} \rangle_{{\rm HS}} \\
&=\sum_{\substack{a' \in {\mathbb Z}_{+}^{N_{0}N_{1}} \\ a' \text{ matches } a \\ \exists \ell' \in {\mathbb N} \text{ s.t. } a'_{\ell'}>n \text{ and } a'_{i}=0 \forall i>\ell'}}\frac{a!}{a'!}\frac{1}{N_{1}^{|a|}} \\
&\leq \left(\frac{|a|^{n}}{n!N_{1}^{n}}\right)^{r}.
\end{align*}
Thus by \eqref{eq:2.1}, we obtain
\begin{align*}
\left\|{\rm Err}_{n}^{(N_{0}N_{1})}(F)\right\|_{2,s}^{2}
&\leq \sum_{a \in {\mathbb Z}_{+}^{N_{0}}}(1+|a|)^{s}{\mathbb E}\left[F{\bf H}_{a}({\bf h}^{(N_{0})})\right]^{2}\left(\frac{|a|^{n}}{n!N_{1}^{n}}\right)^{r} \\
&\leq  \sum_{a \in {\mathbb Z}_{+}^{N_{0}}}(1+|a|)^{s+rn}{\mathbb E}\left[F{\bf H}_{a}({\bf h}^{(N_{0})})\right]^{2}\frac{1}{(n!N_{1}^{n})^{r}} \\
&=\frac{\|F\|_{s+rn}^{2}}{(n!N_{1}^{n})^{r}}.
\end{align*}
Therefore, we conclude the statement.
\end{proof}

\section*{Summary}
\label{sec:5}
For $N_{0},N_{1} \in {\mathbb N}$, $n \in {\mathbb N}$, $s \in {\mathbb R}$ and $r \in [0,1]$, we have shown that it is $(n!N_{1}^{n})^{-r/2}$ that the ${\mathbb D}_{2,s}$-convergence rate of the $n$-th-order error of the $N_{0}N_{1}$-discrete-time Clark--Ocone formula provided by Akahori--Amaba--Okuma \cite{AJATOK} for Wiener functionals belongs to ${\mathbb D}_{2,s+rn}^{(N_{0})}$.
As applications, the higher-order error estimate might be able to be applied to estimates for the tracking error of the delta hedge or the delta--gamma hedge in mathematical finance, or to computational complexity arguments for control variate schemes in order to better truncate the sum of the discrete-time Clark--Ocone formula with considering the computational cost.

\section*{Acknowledgments}
\label{sec:6}
The authors would like to thank Jir\^o Akahori (Ritsumeikan Univ.) for his helpful comments.
The third author was supported by JSPS KAKENHI Grant Number 17J05514.


\begin{thebibliography}{99}
\setlength{\parskip}{0cm} 
\setlength{\itemsep}{0cm} 
\bibitem{AJATOK}\label{AJATOK}
J. Akahori, T. Amaba, and K. Okuma. A discrete-time Clark--Ocone formula and its application to an error analysis. J. Theor. Probab.,  {\bf 30}, 932--960, (2017).
\bibitem{AJKMSTYT}\label{AJKMSTYT}
J. Akahori, M. Kinuya, T. Sawai, and T. Yuasa. An efficient weak Euler--Maruyama type approximation scheme of very high dimensional SDEs by orthogonal random variables. Math. Comput. Simulation, {\bf 187}, 540--565, (2021).
\bibitem{AT}\label{AT}
T. Amaba. A discrete-time Clark--Ocone formula for Poisson functionals. Asia-Pacific Finan Market, {\bf 21}(2), 97--120, (2014).
\bibitem{ATLNNMAST}\label{ATLNNMAST}
T. Amaba, N.-L. Liu, A. Makhlouf, and T. Saidaoui. $L2$-convergence rate for the discretization error of functions of L\'evy process. Stochastics, {\bf 92}(4), 1--29, (2020).
\bibitem{BDHSNTUM}\label{BDHSNTUM}
D. Belomestny, S. H\"afner, T. Nagapetyan and M. Urusov. Variance reduction for discretised diffusions via regression. J. Math. Anal., {\bf 458}, 393--418, (2018).
\bibitem{BMWM}\label{BMWM}
M. Brod\'en and M. Wiktorsson. On the convergence of higher order hedging schemes: the Delta-Gamma case. SIAM J. Finan. Math., {\bf 2}(1), 55--78, (2009).
\bibitem{DNGOBMFP}\label{DNGOBMFP}
G. Di Nunno, B. {\O}ksendal, and F. P. Malliavin. Calculus for Levy Processes with Applications to Finance, Universitext. Springer Berlin Heidelberg, (2010).
\bibitem{GS}\label{GS}
S. Geiss. Quantitative approximation of certain stochastic integrals. Stoch. Stoch. Rep., {\bf 73}(3--4), 241--270, (2002).
\bibitem{GSTA}\label{GSTA}
S. Geiss and A. Toivola. Weak convergence of error processes in discretizations of stochastic integrals and Besov spaces. Bernoulli, {\bf 15}(4), 925--954, (2009).
\bibitem{GEMA}\label{GEMA}
E. Gobet and A. Makhlouf. The tracking Error Rate of the Delta-Gamma hedging strategy. Math. Finance, {\bf 22}(2), 277--309, (2012).
\bibitem{GETE}\label{GETE}
E. Gobet and E. Temam. Discrete time hedging errors for options with irregular payoffs. Finance Stoch., {\bf 5}(3), 357--367, (2001).
\bibitem{HTMP}\label{HTMP}
T. Hayashi and P. Mykland. Evaluating hedging errors: an asymptotic approach. Math. Finance, {\bf 15}(2), 309--343, (2005).
\bibitem{HM}\label{HM}
M. Hujio. On discrete time hedging in d-dimensional option pricing models. preprint, arXiv:math/0703481.
\bibitem{INWS}\label{INWS}
N. Ikeda and S. Watanabe. Stochastic differential equations and diffusion processes. In: North-Holland Mathematical Library vol. 24, Second Edition. North Holland, (1989).
\bibitem{KEPPE}\label{KEPPE}
P. E. Kloeden and E. Platen. Numerical Solutions of Stochastic Differential Equations. Springer-Verlag, Berlin, (1992).
\bibitem{MAST}\label{MAST}
A. Makhlouf and T. Saidaoui. The Delta hedging error for Asian-type options with possibly irregular payoffs. preprint.
\bibitem{MNTS}\label{MNTS}
H. Matsumoto and S. Taniguchi. Stochastic Analysis, It\^o and Malliavin Calculus in Tandem, Cambridge Studies in Advanced Mathematics, vol. 159. Cambridge University Press, (2016).
\bibitem{ND}\label{ND}
D. Nualart. The Malliavin Calculus and Related Topics, Probability and Its Applications, Second Edition. Springer, (2006).
\bibitem{NDNE}\label{NDNE}
D. Nualart and E. Nualart. Introduction to Malliavin Calculus, Institute of Mathematical Statistics Textbooks. Cambridge University Press, (2018).
\bibitem{SI}\label{SI}
I. Shigekawa. Stochastic analysis, Translations of mathematical monographs, vol. 224, American Mathematical Society, (2004).
\bibitem{TE}\label{TE}
E. Temam. Analysis of error with Malliavin calculus: application to hedging. Math. Finance, {\bf 13}(1), 201--214, (2003).
\bibitem{VMSSDSL}\label{VMSSDSL}
M. S. Vidales, D. Siska, and L. Szpruch. Unbiased deep solvers for parametric PDEs. preprint, arXiv:1810.05094v2.
\bibitem{ZR}\label{ZR}
R. Zhang. Couverture approch\'ee des options europ\'eennes. PhD thesis, Ecole Nationale des Ponts et Chauss\'ees, (1999).
\end{thebibliography}
\end{document}